\tikzstyle{NE-lines}=[pattern=north east lines, pattern color=black!45]
\DeclareMathOperator{\Img}{Im}
\newcommand{\quand}{\;\;\,\text{and}\,\;\;}
\newcommand{\alphas}{\alpha_n(S)}         
\newcommand{\betasw}{\kappa_n(S)}         
\newcommand{\betass}{\kappa^{\circ}_n(S)} 
\newcommand{\fubini}{\mathrm{fub}}
\newcommand{\fix}{\mathrm{Fix}}
\newcommand{\twocayhat}{\hat{B}}
\newcommand{\eps}{\epsilon}
\newcommand{\BB}{\mathbb{B}}
\newcommand{\LL}{\mathbb{L}}
\newcommand{\ZZ}{\mathbb{Z}}
\newcommand{\Cay}{\mathrm{Cay}}           
\newcommand{\Sym}{\mathcal{S}}           
\newcommand{\WI}{\mathrm{I}}              
\newcommand{\Asc}{A}                      
\newcommand{\Des}{D}                      
\newcommand{\asc}{\mathrm{asc}}           
\newcommand{\des}{\mathrm{des}}           
\newcommand{\id}{\mathrm{id}}
\newcommand{\ones}{\mathbf{1}}
\newcommand{\hproj}[1]{\hat{#1}\kern-1pt\cdot\kern-1pt\ones} 
\newcommand{\blocks}{\mathrm{blocks}}
\newcommand{\rows}{\mathrm{row}}
\newcommand{\cols}{\mathrm{col}}
\newcommand{\Bur}{\mathrm{Bur}}           
\newcommand{\BBur}{\Bur^{01}}             
\newcommand{\Mat}{\mathrm{Mat}}           
\newcommand{\BMat}{\Mat^{01}}             
\newcommand{\Genmat}{\mathfrak{Mat}}      
\newcommand{\BGenmat}{\Genmat^{01}}
\newcommand{\G}{\mathcal{G}}
\newcommand{\Par}{\mathrm{Par}}
\newcommand{\Bal}{\mathrm{Bal}}
\newcommand{\Cyc}{\mathcal{C}}
\newcommand{\Lin}{\mathit{L}}
\newcommand{\Set}{\mathit{E}}
\newcommand{\LCirc}{\Lin\circ(\Lin^m)_+}
\newcommand{\LCircStr}{\left(\LCirc\right)}
\DeclareMathOperator{\ModProd}{\setlength{\fboxsep}{.25\fboxsep}\boxed{\ast}}
\newcommand{\sgn}{\xi}
\newcommand{\textmultiset}[2]{\bigl(\kern-0.15em{\binom{#1}{#2}}\kern-0.15em\bigr)}
\newcommand{\displaymultiset}[2]{\left(\kern-0.35em{\binom{#1}{#2}}\kern-0.35em\right)}
\newcommand\multiset[2]{\mathchoice{\displaymultiset{#1}{#2}}
  {\textmultiset{#1}{#2}}
  {\textmultiset{#1}{#2}}
  {\textmultiset{#1}{#2}}}
\newcommand{\stone}[2]{\genfrac{[}{]}{0pt}{}{#1}{#2}}
\newcommand{\sttwo}[2]{\genfrac{\lbrace}{\rbrace}{0pt}{}{#1}{#2}}
\newtheorem{theorem}{Theorem}[section]
\newtheorem{proposition}[theorem]{Proposition}
\newtheorem{lemma}[theorem]{Lemma}
\newtheorem*{openproblem*}{Open Problem}
\theoremstyle{definition}
\newtheorem{definition}[theorem]{Definition}
\newtheorem*{remark*}{Remark}
\newtheorem{example}[theorem]{Example}
\newtheorem*{example*}{Example}
\title{Enumerative aspects of Caylerian polynomials}
\date{30 July 2025}
\author[1]{Giulio Cerbai\thanks{G.C. is a member of the Gruppo Nazionale Calcolo Scientifico--Istituto Nazionale di Alta Matematica (GNCS-INdAM).}}
\author[1]{Anders Claesson}
\affil[1]{Department of Mathematics, University of Iceland,
Reykjavik, Iceland, \texttt{akc@hi.is}, \texttt{giulio@hi.is}.}
\begin{document}

\thispagestyle{empty}
\maketitle

\begin{abstract}
  Eulerian polynomials record the distribution of descents over
  permutations. Caylerian polynomials likewise record the distribution
  of descents over Cayley permutations, where
  a Cayley permutation is a word of positive integers such
  that if a number appears in the word then all positive integers less than
  that number also appear in the word. Using combinatorial species and
  sign-reversing involutions we derive counting formulas and generating
  functions for the Caylerian polynomials as well as for related refined
  polynomials.
\end{abstract}

\section{Introduction}

\thispagestyle{empty}

A \emph{Cayley permutation} is a word of positive integers such
that if a number appears in the word then all positive integers less than
that number also appear in the word. We~\cite{CC2} recently introduced the \emph{Caylerian polynomials}, which
record the distribution of descents over Cayley permutations, and
provided a framework in which Burge matrices and Burge words underpin
these polynomials, allowing for generalizations of several results and
combinatorial constructions originally defined for the Eulerian
polynomials. In this sequel to the aforementioned paper~\cite{CC2}, we
apply our framework to obtain counting formulas and generating functions
for the Caylerian polynomials, as well as for some related polynomials
and combinatorial sets.

Our main results are as follows. In Theorem~\ref{thm_caylerian_explicit}
we provide an explicit formula for the $n$th Caylerian polynomial,
$C_n(t)$, in terms of Fubini numbers and Stirling numbers of the first
and second kind:
\[
  C_n(t) =
  \frac{1}{n!}\sum_{k,i}\fubini(k)\stone{n}{k}\sttwo{k}{i}i!(t-1)^{n-i}.
\]
Carlitz's~\cite{C2} identity for the Eulerian polynomials is
$tA_n(t)/(1-t)^{n+1} = \sum_{m \geq 1}m^nt^m$ and in
Theorem~\ref{theorem_genfun_n} we give an analogous identity for the
Caylerian polynomials:
\[
  \sum_{n\ge 0}\frac{tC_n(t)}{(1-t)^{n+1}}x^n =
  \sum_{m\ge 1}\frac{(1-x)^{m}}{2(1-x)^{m}-1}t^m.
\]
There is a natural two-sided generalization, $\twocayhat_n(s,t)$, of the
Caylerian polynomials and in Theorem~\ref{thm_twocay} we give a formula
for those polynomials:
\[
  \twocayhat_n(s,t) =
  \frac{1}{n!}\sum_{k=0}^n\stone{n}{k}\bigl|\Bal^s[k]\bigr|\bigl|\Bal^t[k]\bigr|
\]
Here
\[
  |\Bal^t[n]|=\sum_{w\in\Bal[n]}t^{\blocks(w)}
\]
is a polynomial recording the distribution of the number of blocks over
ballots (ordered set partitions); exact definitions will be given below.

In Section~\ref{sec_caylerian_polys} we recall the definitions of
(weak and strict) Caylerian polynomials and associated combinatorial
sets.  To simultaneously handle the structural and enumerative aspects
of these objects we shall use combinatorial species, which we briefly
introduce in Section~\ref{sec:L-species}. A particular species of
matrices of linear orders is defined in Section~\ref{section_burmat} and
through species equations we derive counting formulas and generating
functions.  In Sections~\ref{section_SRI_1}
and~\ref{section_SRI_2} we provide two sign-reversing involutions to
enumerate Cayley permutations with prescribed ascent set and binary
Burge matrices, respectively. In particular, we derive the following
formula (see Proposition~\ref{formulas_fixascset}) for the number of Cayley
permutations of $[n]$ whose ascent set is a subset of
$S=\{s_1,\dots,s_r\}$:
\[
  \betasw =
  \sum_{k,i}(-1)^i\binom{k}{i}\prod_{j=0}^r\binom{k-i}{s_{j+1}-s_{j}}.
\]
In Section~\ref{sec_final} we
make some closing remarks and summarize the formulas proved in this
paper; see tables~\ref{table_burge}, \ref{table_caylerian}
and~\ref{table_species}.

\section{Caylerian polynomials}\label{sec_caylerian_polys}

A \emph{Cayley permutation}
of $[n]$ is a map $w:[n]\to [n]$ with $\Img(w)=[k]$ for some
$k\leq n$, where $[0]=\emptyset$ and $[n]=\{1,2,\dots,n\}$ if $n\ge 1$,
and we identify the function $w$ with the word $w=w(1)\ldots w(n)$ whenever it is convenient.
Denote by $\Cay[n]$ the set of Cayley permutations on $[n]$.
For instance,
$\Cay[1]=\{ 1\}$, $\Cay[2]=\{ 11,12,21 \}$ and
$$
\Cay[3]=\{111,112,121,122,123,132,211,212,213,221,231,312,321\}.
$$
The term Cayley permutation was first used by Mor and Fraenkel~\cite{MF84} in
1983 and as the name suggests their history traces back to
Cayley. See the recent paper by Cerbai, Claesson, Ernst,
and Golab~\cite{CCEG}, for a short account of the history and the plethora
of guises that Cayley permutations have appeared under.

A \emph{ballot}, or \emph{ordered set partition}, of $[n]$ is a list of
disjoint blocks (nonempty sets) $B_1B_2\ldots B_k$ whose union is $[n]$.
Let $\Bal[n]$ be the set of ballots of $[n]$. It is well known that
Cayley permutations encode ballots: $B_1B_2\ldots B_k\in\Bal[n]$ is
encoded by $w\in\Cay[n]$
where $i\in B_{w(i)}$. For instance, $\{ 2,3,5\}\{ 6\}\{ 1,7\}\{ 4\}$
in $\Bal[7]$ is encoded by $3114123$ in $\Cay[7]$.
As a consequence, $|\Cay[n]|$ is equal to the $n$th
Fubini number; see sequence A000670 in the OEIS~\cite{Sl}.

The \emph{descent set} and
\emph{strict descent set} of $w\in\Cay[n]$ are defined, respectively, as
\begin{align*}
\Des(w) &= \{ i\in[n-1]: w(i)\ge w(i+1)\}; \\
\Des^{\circ}(w) &=\{ i\in[n-1]: w(i)>w(i+1)\}.
\end{align*}
We also let $\des(w)=|\Des(w)|$ and $\des^{\circ}(w)=|\Des^{\circ}(w)|$
denote their cardinalities. The sets $\Asc(w)$ and $\Asc^{\circ}(w)$
of \emph{ascents} and \emph{strict ascents}, as well as
$\asc(w)$ and $\asc^{\circ}(w)$, are defined analogously.
We will often add the word \emph{weak} to ascents and descents
to distinguish them from their strict counterparts.
The \emph{$n$th (weak)  Caylerian polynomial} and the \emph{$n$th
strict Caylerian polynomial} are defined by
$$
C_n(t)=\sum_{w\in\Cay[n]}t^{\des(w)}
\quand
C^{\circ}_n(t)=\sum_{w\in\Cay[n]}t^{\des^{\circ}(w)}.
$$
Let $w\in\Cay[n]$. Define the \emph{reverse}
of $w$ by $w^r(i)=w(n+1-i)$, for each $i\in[n]$. Also, define the \emph{complement} of $w$ by
$w^c(i)=\max(w)+1-w(i)$, for each $i\in[n]$, where $\max(w)=\max\{w(i): i\in [n]\}$.
The symmetry group generated by reverse and complement acts
on Cayley permutations, and for this reason we can replace
(weak) descents with (weak) ascents in the definition of the
Caylerian polynomials.
Furthermore, since $\des^{\circ}(w)=n-1-\asc(w)$, the coefficients
of the strict Caylerian polynomials $C^{\circ}_n(t)$ are simply
the reverse of the coefficients of $C_n(t)$:
$$
C^{\circ}_n(t)=t^{n-1}C_n\bigl(1/t\bigr).
$$
The resulting triangle of coefficients is A366173~\cite{Sl}.

The set $\Bur[n]$ of \emph{Burge words}~\cite{AlUh,Bu,CC,CC2} of size~$n$ is defined as
$$
\Bur[n]=
\{ (u,v)\in\WI[n]\times\Cay[n]: \Des(u)\subseteq \Des(v)\},
$$
where $\WI[n]$ denotes the set of weakly increasing Cayley
permutations of size~$n$.
A \emph{Burge matrix}~\cite{CC,CC2} is a matrix with nonnegative integer entries
whose every row and column has at least one nonzero entry.
The size of a Burge matrix is the sum of its entries and we let
$\Mat[n]$ denote the set of Burge matrices of size~$n$.
A bijection between $\Bur[n]$ and $\Mat[n]$ is obtained by
mapping the Burge word $(u,v)$ to the matrix $A=(a_{ij})$ where
$a_{ij}$ is equal to the number of pairs $(u(\ell),v(\ell))=(i,j)$
in $(u,v)$. For instance, the Burge word below corresponds to the Burge matrix
to its right:
$$
\binom{1\;1\;1\;2\;3\;3\;3\;3\;3}{3\;3\;1\;3\;4\;2\;2\;2\;2}
\quad
\begin{bmatrix}
  1 & 0 & 2 & 0 \\
  0 & 0 & 1 & 0 \\
  0 & 4 & 0 & 1
\end{bmatrix}.
$$
The inverse map is obtained by associating each matrix $A=(a_{ij})$
in $\Mat[n]$ with the biword $\binom{u}{v}$ of size $n$ where
any column $\binom{i}{j}$ appears $a_{ij}$ times, and the columns
are sorted in ascending order with respect to the top entry, breaking
ties by sorting in descending order with respect to the bottom entry.

Under the bijection described above, the set $\BMat[n]$
of \emph{binary Burge matrices} corresponds to
$$
\BBur[n] =
\bigl\{(u,v)\in\Bur[n]:\Des(u)\subseteq\Des^{\circ}(v)\bigr\}.
$$
The sequence of cardinalities $|\Mat[n]|=|\Bur[n]|$ is recorded as
A120733~\cite{Sl}, while its binary counterpart $|\BMat[n]|=|\BBur[n]|$
gives A101370.

There is a significant interplay between Cayley permutations and Burge
structures. Indeed, we~\cite[Theorem 5.1]{CC2} showed that:
\begin{align*}
C_n(2)         &= \bigl|\Bur[n]\bigr|  = \bigl|\Mat[n]\bigr|;\\
C^{\circ}_n(2) &= \bigl|\BBur[n]\bigr| = \bigl|\BMat[n]\bigr|,
\end{align*}
a generalization of the well-known fact that the $n$th Eulerian
polynomial evaluated at~$2$ is equal to $|\Bal[n]|$,
the $n$th Fubini number.
Pushing the interplay between Caylerian polynomials and
Burge structures further, we~\cite{CC2} defined the \emph{weak} and
\emph{strict two-sided Caylerian polynomials} by
\begin{align*}
\twocayhat_n(s,t) &= \sum_{A\in\Mat[n]}s^{\rows(A)}t^{\cols(A)};\\
\twocayhat^{\circ}_n(s,t) &= \sum_{A\in\BMat[n]}s^{\rows(A)}t^{\cols(A)},
\end{align*}
where $\rows(A)$ and $\cols(A)$ denote the number of rows and columns
of $A$, respectively, and showed that~\cite[Corollary 6.5]{CC2}:
\begin{align*}
C_n(t)&=(t-1)^n\twocayhat_n\left(1,\frac{1}{t-1}\right);\\
C^{\circ}_n(t)&=(t-1)^n\twocayhat^{\circ}_n\left(1,\frac{1}{t-1}\right).
\end{align*}

\section{$\LL$-species}\label{sec:L-species}

To delve deeper into the enumerative
aspects of Caylerian polynomials and Burge matrices we will
use combinatorial species.
The main references for the theory
of combinatorial species are Joyal's seminal paper~\cite{Joyal1981} and
the book by Bergeron, Labelle
and Leroux~\cite{BLL}. Shorter introductions can be found in the
papers by Claesson~\cite{Cl} and---in the context of (pattern-avoiding)
Cayley permutations---Cerbai et al.~\cite{CCEG}.

The prototypical combinatorial species are the
\emph{$\BB$-species}, and they are endofunctors on the category
$\BB$ whose objects are finite sets and whose morphisms are bijections.
We will, however, be using so called $\LL$-species, where $\LL$ denotes
the category of finite totally ordered sets with order-preserving
bijections as morphisms.

An \emph{$\LL$-species} is a functor $F:\LL\to\BB$ or, unwinding the
definition of a functor, an $\LL$-species is a rule $F$ that associates
\begin{itemize}
\item to each finite totally ordered set $\ell$, a finite set $F[\ell]$;
\item to each order preserving bijection $\sigma: \ell_1 \to \ell_2$, a
  bijection $F[\sigma]: F[\ell_1] \to F[\ell_2]$ such that
  $F[\sigma \circ \tau] = F[\sigma] \circ F[\tau]$ for all order
  preserving bijections $\sigma: \ell_1 \rightarrow \ell_2$,
  $\tau: \ell_2 \rightarrow \ell_3$, and $F[\id_{\ell}]=\id_{F[\ell]}$.
\end{itemize}
We will need the following species whose careful definitions
can be found in the aforementioned references~\cite{BLL,CCEG,Cl,Joyal1981}:
\begin{multicols}{2}
\begin{enumerate}[(a)]
\item $1$: characteristic of empty set;
\item $X$: singletons;
\item $\Set$: sets;
\item $\Set_+$: nonempty sets;
\item $\Lin$: linear orders;
\item $\Sym$: permutations;
\item $\Par$: set partitions;
\item $\Bal$: ballots.
\end{enumerate}
\end{multicols}
An element $s\in F[\ell]$ is called an \emph{$F$-structure} on $\ell$,
and the function $F[\sigma]$ is called the
\emph{transport of $F$-structures along $\sigma$}.
We shall often use $\ell=[n]$ with the natural order as our underlying totally
ordered set, and for ease of notation we write $F[n]=F[[n]]$.
The \emph{(exponential) generating function} of the species
$F$ is the formal power series
\[
   F(x) = \sum_{n \geq 0} |F[n]| \frac{x^n}{n!}.
\]
For instance, $1(x)=1$, $X(x)=x$, $\Set(x)=e^x$, and
$\Set_+(x)=e^x-1$. In general, if $F$ is a species, then $F_+$ denotes
the species of nonempty $F$-structures, and hence $F=1+F_+$.

Two species are considered (combinatorially) equal if there is a natural
isomorphism between them. Clearly, if $F=G$, then $F(x)=G(x)$. For
$\BB$-species, the converse is false, the typical example being
$\Lin\neq \Sym$, but $\Lin(x)=\Sym(x)=1/(1-x)$. For $\LL$-species it is,
however, the case that $F=G \Leftrightarrow F(x)=G(x)$. This is a
consequence of there being a unique order preserving bijection between
any given pair of finite totally ordered sets of the same
cardinality. So while transport of structure plays a crucial role in the
theory of $\BB$-species it is less important in the context of
$\LL$-species, and will usually be left implicit.
Henceforth, all species are assumed to be $\LL$-species and
we drop the prefix $\LL$.

We can construct new species from existing species using operations such
as addition, multiplication, cartesian multiplication, and composition.

The \emph{sum} of two species $F$ and $G$ is simple to define: for any finite
totally ordered set $\ell$, let
$(F+G)[\ell] = F[\ell] \sqcup G[\ell]$, where $\sqcup$ denotes disjoint
union. Clearly, $(F+G)(x)=F(x)+G(x)$.

For the \emph{product}, an
$(F\cdot G)$-structure on $\ell$ is a pair $(s,t)$, where $s$ is an
$F$-structure on a subset $\ell_1$ of $\ell$ and $t$ is a $G$-structure
on the remaining elements $\ell_2=\ell\setminus \ell_1$. That is,
$(F\cdot G)[\ell] = \bigsqcup (F[\ell_1] \times G[\ell_2])$ in which the
union is over all pairs $(\ell_1,\ell_2)$ such that
$\ell = \ell_1\cup \ell_2$ and $\ell_1\cap \ell_2 = \emptyset$.  It is
easy to see that $(FG)(x)=F(x)G(x)$.

The \emph{cartesian product} is defined by
$(F\times G)[\ell]=F[\ell]\times G[\ell]$. In other words, an
$(F\times G)$-structure on $\ell$ is the superposition of an
$F$-structure and a $G$-structure on $\ell$. Clearly,
$|(F\times G)[n]|=|F[n]|\!\cdot\!|G[n]|$ and in terms of exponential generating
functions this corresponds to the (exponential) Hadamard product of $F(x)$ and $G(x)$.

Assume $G[\emptyset]=\emptyset$, that is, there are no $G$-structures on
the empty set.  The \emph{(partitional) composition} of $F$ and $G$,
denoted $F\circ G$ or $F(G)$, is defined by
\[
(F\circ G)[\ell]\,=
\bigcup_{\beta\in\Par[\ell]}F[\beta]\times\prod_{B\in\beta}G[B],
\]
in which $\prod$ denotes the set-theoretical cartesian product.
In terms of cardinalities,
\begin{equation}\label{comp_formula}
|(F\circ G)[n]| \,= \sum_{\beta\in\Par[n]}|F[\beta]|\prod_{B\in\beta}|G[B]|
\end{equation}
and one can show that $(F \circ G)(x) = F(G(x))$.
Intuitively, an $(F\circ G)$-structure is a generalized
partition in which each block of the partition carries a $G$-structure,
and the blocks are structured by $F$.
For instance, a set partition is a set of nonempty sets, while a
ballot (ordered set partition) is a linear order of nonempty
sets:
$$
\Par = \Set\circ\Set_+
\quand
\Bal = \Lin\circ\Set_+,
$$
from which
$\Par(x)=\exp(e^x-1)$ and $\Bal(x)=1/(2-e^x)$
follow.

Cayley permutations can also be defined as a species~\cite{CCEG}:
\[
\Cay[\ell]= \bigl\{w\in[n]^\ell \!:\, \Img(w)=[k]\text{ for some } k\leq n\bigr\},\\
\]
where $n=|\ell|$ (and with transport of structure defined by
$\Cay[\sigma](w)=w \circ \sigma^{-1}$). As previously mentioned there is
a natural bijection between Cayley permutations and ballots, and hence
$\Cay=\Bal$.



Let us now return to the species $\Lin$ of linear orders. An
$\Lin$-structure on $\ell$ is a bijection $w:[n]\to\ell$, which we may
represent using one-line notation as $w=w(1)\ldots w(n)$. More generally,
an $\Lin^m$-structure on $\ell$ is an $m$-tuple, or vector, of linear
orders on disjoint underlying sets $\ell_1\cup\dots\cup\ell_m=\ell$.
And, an $(\Lin^m)_+$-structure is such a vector with at least one nonempty entry.
The generating function of $\Lin^m$ is
\begin{equation*}
\Lin^m(x)
=\frac{1}{(1-x)^m}
=(1+x+x^2+\cdots)^m
=\sum_{n\ge 0}\multiset{m}{n}x^n,
\end{equation*}
where the multichoose coefficient $\multiset{m}{n}=\binom{m+n-1}{n}$
is the number of multisets of cardinality $n$ over $[m]$. Thus,
\begin{equation}\label{eq_multiset}
  |\Lin^m[n]| = n!\multiset{m}{n}.
\end{equation}

\section{Matrices of linear orders}\label{section_burmat}

An $(\Lin\circ(\Lin^m)_+)$-structure is obtained by placing an
$(\Lin^m)_+$-structure on every block of a set partition of $\ell$, and
then a linear order on the blocks. Viewing an
$(\Lin^m)_+$-structure as a column vector with $m$ components, it
is natural to view an $\LCircStr$-structure as a matrix with $m$ rows.
We shall give an alternative description of those matrices, but first
we make a couple of definitions. For $(\alpha_1,\alpha_2)\in\Lin^2[\ell]$
we use juxtaposition, $\alpha_1\alpha_2\in\Lin[\ell]$, to denote the
concatenation of $\alpha_1$ and $\alpha_2$. For instance, if
$(\alpha_1,\alpha_2)=(451,32)\in\Lin^2[5]$, then
$\alpha_1\alpha_2=45132\in\Lin[5]$. Similarly, for a vector
$\mathbf{a}=(\alpha_1,\dots,\alpha_m)\in\Lin^m[\ell]$ of linear orders,
let $\prod\mathbf{a}=\alpha_1\ldots\alpha_m$.
We now have the following characterization of the matrices in question.

\begin{lemma}\label{lemma:matrix-characterization}
  Let
  $A=\bigl[\,\mathbf{a}_1\;\mathbf{a}_2\;\cdots\;\mathbf{a}_{k}\,\bigr]$
  be a matrix whose $i$th column is the vector $\mathbf{a}_i$ and whose
  entries are linear orders of disjoint sets. Then, $A$ is an
  $\LCircStr$-structure on $\ell$ if and only if $A$ has $m$ rows and
  it satisfies the following two conditions:
  \begin{enumerate}
  \item $\prod A := \prod\mathbf{a}_1\prod\mathbf{a}_2\,\cdots\,\prod\mathbf{a}_{k}\in\Lin[\ell]\hspace{0.3pt};$
  \item $\prod \mathbf{a}_j \neq \eps$\, for each $j\in [k]$.
  \end{enumerate}
\end{lemma}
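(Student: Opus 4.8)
The plan is to unwind the definition of partitional composition and match its data to the two conditions, establishing a bijection between $\LCircStr$-structures and matrices satisfying (1) and (2). Recall that $\bigl(\LCirc\bigr)[\ell]$ consists, for each set partition $\beta\in\Par[\ell]$, of a choice of linear order on the blocks together with an $(\Lin^m)_+$-structure on each block $B\in\beta$. First I would observe that imposing a linear order on the blocks of $\beta$ is the same as presenting those blocks as an ordered list $B_1,\dots,B_k$; this orders the columns, so an $\LCircStr$-structure is precisely a sequence $\mathbf{a}_1,\dots,\mathbf{a}_k$ where each $\mathbf{a}_j\in(\Lin^m)_+[B_j]$ and the $B_j$ partition $\ell$. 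Writing each $\mathbf{a}_j$ as a column vector of $m$ linear orders of disjoint sets gives exactly a matrix $A$ with $m$ rows whose entries are linear orders of disjoint sets, so the ``$m$ rows'' clause and the disjointness hypothesis are automatic from the construction.

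Next I would translate the two defining requirements of the composition into conditions (1) and (2). The requirement that each block be nonempty, i.e.\ $\mathbf{a}_j\in(\Lin^m)_+[B_j]$ with $B_j\neq\emptyset$, says precisely that the concatenation $\prod\mathbf{a}_j$ of the entries in column $j$ is a nonempty word, which is condition~(2): $\prod\mathbf{a}_j\neq\eps$. For condition~(1), I would use the fact that the underlying sets of the $\mathbf{a}_j$ are the blocks $B_j$, which are disjoint and whose union is $\ell$; concatenating the column words in the order $\prod\mathbf{a}_1\,\prod\mathbf{a}_2\cdots\prod\mathbf{a}_k$ therefore lists each element of $\ell$ exactly once, and so yields a linear order on all of $\ell$. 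Thus $\prod A\in\Lin[\ell]$, which is condition~(1).

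For the converse, given a matrix $A$ with $m$ rows of linear orders of disjoint sets satisfying (1) and (2), I would recover the composition data: set $B_j$ to be the underlying set of the $j$th column (the union of the underlying sets of its $m$ entries). Condition~(2) forces each $B_j$ to be nonempty, and condition~(1)---the fact that $\prod A$ is a single linear order on $\ell$---guarantees that the $B_j$ are pairwise disjoint and cover $\ell$, since a word in which every element of $\ell$ appears exactly once cannot repeat or omit any element. Hence $\{B_1,\dots,B_k\}$ is a genuine set partition of $\ell$, the column order supplies the $\Lin$-structure on its blocks, and the $j$th column is an $(\Lin^m)_+$-structure on $B_j$; this reconstructs an $\LCircStr$-structure and is inverse to the forward map.

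The main obstacle, and the only point requiring a genuine argument rather than bookkeeping, is extracting \emph{disjointness and coverage} of the columns' underlying sets purely from condition~(1). The subtlety is that condition~(1) is phrased as a single global constraint ``$\prod A\in\Lin[\ell]$'' rather than as explicit combinatorial conditions on the $B_j$; the content of the lemma is that this one concatenation condition simultaneously encodes both that the blocks partition $\ell$ and that the column vectors assemble correctly. I would make this precise by a counting/injectivity argument: since the entries are already linear orders of disjoint sets within each column, $\prod A$ being a bijection $[n]\to\ell$ (with $n=|\ell|$) is equivalent to the multiset of letters appearing across all columns being exactly $\ell$ with no repetition, which is exactly the statement that the $B_j$ are disjoint with union $\ell$.
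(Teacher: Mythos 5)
Your proof is correct and takes essentially the same approach as the paper: the paper states this lemma without an explicit proof, treating it as a direct unwinding of the definition of partitional composition, which is precisely what you carry out (blocks become column supports, the $\Lin$-structure on blocks becomes the column order, and nonemptiness/coverage become conditions (2) and (1)). The only minor over-caution in your write-up is the worry about extracting disjointness from condition (1): disjointness of the entries' underlying sets is already part of the lemma's hypothesis, so only coverage of $\ell$ needs to be read off from $\prod A\in\Lin[\ell]$, exactly as you do.
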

In other words, a matrix $A$ with $m$ rows whose entries are linear
orders belongs to $\LCircStr[\ell]$ if different entries (linear orders)
have disjoint underlying sets, the union of the underlying sets is
$\ell$, and each column contains at least one nonempty linear order. For
instance, there are fourteen $(\Lin\circ(\Lin^2)_+)$-structures on $\{1,2\}$:
\begin{gather*}
  \begin{bmatrix}
    1 \\
    2
  \end{bmatrix}\;\,
  \begin{bmatrix}
    2 \\
    1
  \end{bmatrix}\;\,
  \begin{bmatrix}
    12 \\
    \eps
  \end{bmatrix}\;\,
  \begin{bmatrix}
    21 \\
    \eps
  \end{bmatrix}\;\,
  \begin{bmatrix}
    \eps \\
    12
  \end{bmatrix}\;\,
  \begin{bmatrix}
    \eps \\
    21
  \end{bmatrix} \\[1ex]
  \begin{bmatrix}
    1 & \eps \\
    \eps & 2
  \end{bmatrix}\;
  \begin{bmatrix}
    2 & \eps \\
    \eps & 1
  \end{bmatrix}\;
  \begin{bmatrix}
    \eps & 1 \\
    2 & \eps
  \end{bmatrix}\;
  \begin{bmatrix}
    \eps & 2 \\
    1 & \eps
  \end{bmatrix}\;
  \begin{bmatrix}
    1 & 2 \\
    \eps & \eps
  \end{bmatrix}\;
  \begin{bmatrix}
    2 & 1 \\
    \eps & \eps
  \end{bmatrix}\;
  \begin{bmatrix}
    \eps & \eps \\
    1 & 2
  \end{bmatrix}\;
  \begin{bmatrix}
    \eps & \eps \\
    2 & 1
  \end{bmatrix}.
\end{gather*}
Of these fourteen matrices seven are such that $\prod A=12$. In general,
the species of matrices in $\LCircStr[\ell]$ such that $\prod A$ is the
increasing linear order of $\ell$ can be defined as follows.

\begin{definition}\label{def:Genmat}
  For a totally ordered set $\ell=\{u_1,\dots,u_n\}$ with
  $u_1<\dots<u_n$ we define that an $m$-row matrix
  $A=\bigl[\,\mathbf{a}_1\;\mathbf{a}_2\;\cdots\;\mathbf{a}_{k}\,\bigr]$
  is an $\Genmat_{m}$-structure on~$\ell$ if its entries are linear
  orders of disjoint sets and it satisfies the following two conditions:
  \begin{enumerate}
  \item $\prod A = u_1u_2\ldots u_n$;
  \item $\prod \mathbf{a}_j \neq \eps$\, for each $j\in [k]$.
  \end{enumerate}
\end{definition}

In the prequel~\cite{CC2} the same notation, $\Genmat_{m}[n]$, was used
to denote the set of $m$-row matrices with nonnegative integer entries
whose total sum is $n$ and have at least one positive entry in each
column. The difference is mostly superficial and such matrices are
obtained from the matrices defined here by taking the length of each
entry. The slightly more complex definition given here better fits the
species framework we would like to work within.


We will now define a natural action of permutations on $\Genmat_m$
that will directly lead to our next simple lemma. Recall that a
permutation $w\in\Sym[\ell]$ is a bijection $w:\ell\to\ell$.
Permutations of a totally ordered set can also be represented in one-line notation:
$w=w(u_1)\ldots w(u_n)$, where $\ell=\{u_1,\dots,u_n\}$ and
$u_1<\dots<u_n$. Now, a permutation $w\in\Sym[\ell]$ acts on a
matrix $A=(a_{ij})\in\Genmat_m[\ell]$ by replacing each entry
$a_{ij}=c_1c_2\ldots c_r$ with $w\cdot a_{ij}:=w(c_1)w(c_2)\ldots w(c_r)$,
and we denote the resulting matrix by $w\cdot A=(w\cdot a_{ij})$.

\begin{lemma}\label{lemma_LxBur}
  For each $m\ge 0$,
  \[
    \Sym\times\Genmat_m = \LCirc.
  \]
\end{lemma}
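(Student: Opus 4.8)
The plan is to promote the permutation action defined just above the statement to a natural isomorphism $\Phi$ between the two species, given on each totally ordered set $\ell=\{u_1<\dots<u_n\}$ by $\Phi(w,A)=w\cdot A$. By Lemma~\ref{lemma:matrix-characterization}, an $\LCircStr$-structure on $\ell$ is exactly an $m$-row matrix $B$ of linear orders whose nonempty entries have disjoint underlying sets with union $\ell$, with a nonempty product $\prod\mathbf{a}_j\neq\eps$ in every column, and with $\prod B\in\Lin[\ell]$ an \emph{arbitrary} linear order; by Definition~\ref{def:Genmat}, the species $\Genmat_m$ singles out those matrices for which $\prod A$ is the \emph{increasing} order $u_1\ldots u_n$. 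Thus the content of the lemma is that pairing such a canonically ordered matrix with a permutation of $\ell$ carries exactly the same data as recording an arbitrary order on $\ell$.

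First I would check that $\Phi$ lands in $\LCircStr[\ell]$. Since $w\colon\ell\to\ell$ is a bijection, applying $w$ letter-by-letter to each entry $a_{ij}$ sends the underlying set of $a_{ij}$ to its image under $w$; disjointness of supports, the covering of $\ell$, and the nonemptiness of each column are therefore all preserved, so $w\cdot A$ again satisfies conditions (1) and (2) of Lemma~\ref{lemma:matrix-characterization}. Moreover $\prod(w\cdot A)=w(u_1)\,w(u_2)\cdots w(u_n)$, a linear order on $\ell$, so indeed $w\cdot A\in\LCircStr[\ell]$.

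Next I would exhibit the inverse and thereby prove each $\Phi_\ell$ is a bijection. Given $B\in\LCircStr[\ell]$, its product $\prod B=v_1v_2\ldots v_n$ is a linear order on $\ell$, and there is a unique permutation $w\in\Sym[\ell]$ with $w(u_i)=v_i$ for all $i$. Setting $A:=w^{-1}\cdot B$, one computes $\prod A=w^{-1}(v_1)\ldots w^{-1}(v_n)=u_1\ldots u_n$, so $\prod A$ is increasing and $A\in\Genmat_m[\ell]$; since the rule is a genuine left action of $\Sym[\ell]$ we have $w\cdot A=w\cdot(w^{-1}\cdot B)=B$. For injectivity, if $w\cdot A=w'\cdot A'$ then applying $\prod$ gives $w(u_1)\ldots w(u_n)=w'(u_1)\ldots w'(u_n)$, forcing $w=w'$ and hence $A=A'$. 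This makes $\Phi_\ell$ a bijection for every $\ell$.

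Finally, because we work with $\LL$-species, equality of species follows either by noting that $\Phi$ commutes with transport along order-preserving bijections — immediate, since the reconstruction of $w$ from $\prod B$ refers only to the increasing order, which is preserved under such bijections — or, more cheaply, by invoking the principle $F=G\Leftrightarrow F(x)=G(x)$ together with the cardinality identity $|\LCircStr[n]|=n!\,|\Genmat_m[n]|=|\Sym[n]|\cdot|\Genmat_m[n]|=|(\Sym\times\Genmat_m)[n]|$ that the bijection yields. I do not anticipate a genuine obstacle here; the only points needing care are the bookkeeping that the support and disjointness conditions of Lemma~\ref{lemma:matrix-characterization} survive the entrywise action, and the verification that reading $w$ off $\prod B$ really restores the increasing product $\prod A$.
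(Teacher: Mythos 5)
Your proposal is correct and follows essentially the same route as the paper: both use the map $(w,A)\mapsto w\cdot A$ as a size-preserving bijection, with the inverse recovering $w$ as $\prod M$ and setting $A=w^{-1}\cdot M$, and both reduce the verification to the conditions of Definition~\ref{def:Genmat} and Lemma~\ref{lemma:matrix-characterization}. Your write-up simply makes explicit the routine checks (preservation of supports and column-nonemptiness, injectivity, and compatibility with transport) that the paper leaves implicit.
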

\begin{proof}
  To prove this species identity, it suffices to give a size-preserving
  bijection from $\left(\Sym\times\Genmat_m\right)$-structures to
  $\LCircStr$-structures, and we claim that
  \[
    (w,A)\,\mapsto\, w\cdot A=:M
  \]
  is such a map. Indeed, it is easy to see that since $A$ satisfies the
  two conditions of Definition~\ref{def:Genmat}, the matrix $M$
  satisfies the corresponding two conditions of
  Lemma~\ref{lemma:matrix-characterization}.  Also, the inverse map is
  $M\mapsto (w,\, w^{-1}\cdot M)$ where $w=\prod M$.
\end{proof}

\begin{example}\label{example_wA}
Let $(w,A)\in\left(\Sym\times\Genmat_4\right)[9]$ be given by
$$
w=784652391
\quad\text{and}\quad
A=\begin{bmatrix}
\cdot & 5 & 67\\
123 & \cdot & \cdot\\
\cdot & \cdot & \cdot\\
4 & \cdot & 89
\end{bmatrix}.
$$
The bijection described in the proof of Lemma~\ref{lemma_LxBur}
maps $(w,A)$ to the matrix
$$
M=
\begin{bmatrix}
\cdot & 5 & 23\\
784 & \cdot & \cdot\\
\cdot & \cdot & \cdot\\
6 & \cdot & 91
\end{bmatrix}\in\bigl(\Lin\circ(\Lin^4)_+\bigr)[9].
$$
Note also that $w=\prod M$, $w^{-1}=967354128$, and $w^{-1}\cdot M = A$.
\end{example}

Next we shall derive a counting formula for $\Genmat_{m}$ from the
species equation given in Lemma~\ref{lemma_LxBur}, but first we need to
introduce some auxiliary notation.  Let us write $(a_1,\dots,a_k)\models n$ to
indicate that $(a_1,\dots,a_k)$ is a \emph{composition} of~$n$; that is,
each $a_i$ is a positive integer and $\sum_{i=1}^k a_i=n$. Moreover, for
$\alpha=(a_1,\dots,a_k)\models n$, let
\[
  \binom{n}{\alpha}=\binom{n}{a_1,\dots,a_k}=n!\,\big/\prod_{a\in\alpha}a!
\]
denote the associated multinomial coefficient. Now, for each $m,n\ge 0$,
\begin{align}
  |\Genmat_{m}[n]|
  &=\frac{1}{n!}\bigl|\LCircStr[n]\bigr| \nonumber\\[1ex]
  &=\frac{1}{n!}\sum_{\beta\in\Par[n]}|\Lin[\beta]|\prod_{B\in\beta}|\Lin^m[B]| \nonumber\\
  &=\frac{1}{n!}\sum_{\beta\in\Par[n]}k!\prod_{B\in\beta}\multiset{m}{|B|}|B|! \nonumber\\[0.9ex]
  &=\frac{1}{n!}\sum_{\alpha\models n}\binom{n}{\alpha}
    \prod_{a\in\alpha}\multiset{m}{a}a!
  =\sum_{\alpha\models n}\prod_{a\in\alpha}\multiset{m}{a}.\label{eq_genmat}
\end{align}
While this formula has a certain aesthetic appeal, it also has $2^{n-1}$
terms and is thus an ineffective way of counting these matrices. To
arrive at an effective formula we will need to derive another
species identity, but let us first give a simple general lemma
concerning the composition of species.

\begin{lemma}\label{lemma_species_comp}
  For any species $F$ and $G$, with $G[\emptyset]=\emptyset$,
  \[ |(F\circ G)[n]| \,=\,\sum_{k=0}^n\,|F[k]|\cdot |E_k(G)[n]|,
  \]
  where $E_k$ denotes the species characteristic of sets of cardinality
  $k$, defined by $E_k[\ell]=\{\ell\}$ if $|\ell|=k$ and
  $E_k[\ell]=\emptyset$ otherwise.
\end{lemma}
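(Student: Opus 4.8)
The plan is to start from the cardinality form of partitional composition, equation~\eqref{comp_formula}, applied directly to $F\circ G$:
\[
|(F\circ G)[n]|=\sum_{\beta\in\Par[n]}|F[\beta]|\prod_{B\in\beta}|G[B]|.
\]
The central observation is that, since $F$ is an $\LL$-species, $|F[\beta]|$ depends only on the number of blocks of~$\beta$, not on the blocks themselves. Indeed, the underlying set $\ell=[n]$ is totally ordered, so its blocks inherit a canonical total order (for instance, by least element); if $\beta$ has $k$ blocks, the unique order-preserving bijection between this ordered set of blocks and $[k]$ transports $F$-structures and hence gives $|F[\beta]|=|F[k]|$.

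Granting this, I would regroup the sum over $\beta\in\Par[n]$ according to the number of blocks $k$ and pull out the factor $|F[k]|$, which is now constant on each group:
\[
|(F\circ G)[n]|=\sum_{k=0}^{n}|F[k]|\sum_{\substack{\beta\in\Par[n]\\|\beta|=k}}\prod_{B\in\beta}|G[B]|.
\]
It then only remains to identify the inner sum with $|E_k(G)[n]|$.

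For that final step I would apply~\eqref{comp_formula} once more, this time to $E_k\circ G$. By the definition of $E_k$ we have $|E_k[\beta]|=1$ when $\beta$ has exactly $k$ blocks and $|E_k[\beta]|=0$ otherwise, so
\[
|(E_k\circ G)[n]|=\sum_{\beta\in\Par[n]}|E_k[\beta]|\prod_{B\in\beta}|G[B]|=\sum_{\substack{\beta\in\Par[n]\\|\beta|=k}}\prod_{B\in\beta}|G[B]|,
\]
which is precisely the inner sum appearing above. Substituting this back yields the claimed identity.

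The only real subtlety—the ``main obstacle,'' such as it is—is the first observation that $|F[\beta]|$ is constant over all set partitions with a fixed number of blocks; the rest is bookkeeping. This constancy is exactly the feature that makes the $\LL$-species setting convenient here: it is what allows the single partition-indexed sum to factor into an $F$-contribution depending only on~$k$ and a $G$-dependent contribution that reassembles, via~\eqref{comp_formula}, into $E_k\circ G$.
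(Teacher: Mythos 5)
Your proof is correct and follows essentially the same route as the paper: both start from the cardinality formula~\eqref{comp_formula}, regroup the sum over $\beta\in\Par[n]$ by the number of blocks $k$ using $|F[\beta]|=|F[k]|$, and identify the inner sum with $|E_k(G)[n]|$ via the indicator $|E_k[\beta]|$. The only difference is cosmetic: you make explicit the justification (the canonical order on blocks and the unique order-preserving bijection to $[k]$) that the paper leaves implicit when it replaces $|F[\beta]|$ by $|F[k]|$.
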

\begin{proof}
  We have
  \begin{align*}
    |(F\circ G)[n]|
    &= \sum_{\beta\in\Par[n]} |F[\beta]|\prod_{B\in\beta}|G[B]| \\
    &= \sum_{k=0}^n|F[k]|\!\sum_{\beta\in\Par[n]}|E_k[\beta]|\prod_{B\in\beta}|G[B]|
    = \sum_{k=0}^n|F[k]|\cdot |E_k(G)[n]|.\qedhere\\
  \end{align*}
\end{proof}

In further preparation for the next result, let us introduce the species of connected
linear orders. Suppose that the species $F$ and $G$
are related by $F=\Set(G)$, so that an $F$-structure is a set of
$G$-structures. We may then call $G$ the species of
\emph{connected $F$-structures}, denoted by $G=F^c$.
A classic example is $\Sym=\Set(\Cyc)$, where $\Cyc$ is the species
of cycles. That is, a connected permutation is a cycle, and any permutation
decomposes uniquely as a set of cycles.
The $(n,k)$th Stirling number of the first kind counts
permutations of $[n]$ with $k$ cycles:
\begin{equation*}
\stone{n}{k}=|\Set_k(\Cyc)[n]|.
\end{equation*}
A similar decomposition is obtained by splitting linear orders
by their left-to-right minima. Here, a \emph{left-to-right
minimum} of a linear order~$w$ is an entry $w(i)$ such that $w(i)<w(j)$ for
each $j<i$. For instance, the linear order $w=784652391$ of
Example~\ref{example_wA} decomposes as
$$
w\;=\;78\,|\,465\,|\,239\,|\,1.
$$
In this sense, a connected linear order is a linear order that begins
with the minimum of the underlying set, and any linear order decomposes
uniquely as a set of connected linear orders. To reconstruct the linear
order we juxtapose the connected components with their minima in
decreasing order, so that the minima of the connected components are the
left-to-right minima of the resulting linear order. In particular, from
$L(x)=1/(1-x)$, $E(x)=e^x$, and $\Lin=\Set(\Lin^c)$, it follows that
\[
  \Lin^c(x)=\log\left(\frac{1}{1-x}\right).
\]
\begin{proposition}\label{prop_bal_mLc}
  For each $n,m\ge 0$,
  $$
  \Sym\times\Genmat_m=\Bal(m\Lin^c)
  \quad\text{and}\quad
  |\Genmat_{m}[n]|=\frac{1}{n!}\sum_{k=0}^n \stone{n}{k}m^k \fubini(k),
  $$
  where $\fubini(k)$ denotes the $k$th Fubini number.
\end{proposition}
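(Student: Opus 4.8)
The plan is to prove the species identity first and then read off the counting formula by taking cardinalities. For the identity, I would start from Lemma~\ref{lemma_LxBur}, which already gives $\Sym\times\Genmat_m=\LCirc$, so it remains to show $\LCirc=\Bal(m\Lin^c)$. Since all our species are $\LL$-species, two of them are equal as soon as their generating functions agree, so I would simply compute both sides. On the one hand, $(\Lin^m)_+(x)=(1-x)^{-m}-1$ and $\Lin(x)=1/(1-x)$ give $\LCirc(x)=\Lin\bigl((\Lin^m)_+(x)\bigr)=1/\bigl(2-(1-x)^{-m}\bigr)$. On the other hand, $(m\Lin^c)(x)=m\log\bigl(1/(1-x)\bigr)$, so $e^{(m\Lin^c)(x)}=(1-x)^{-m}$, and together with $\Bal(x)=1/(2-e^x)$ this yields $\Bal(m\Lin^c)(x)=1/\bigl(2-(1-x)^{-m}\bigr)$. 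The two generating functions coincide, establishing the first identity.

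For the counting formula, I would pass to cardinalities. The cartesian product gives $|(\Sym\times\Genmat_m)[n]|=|\Sym[n]|\cdot|\Genmat_m[n]|=n!\,|\Genmat_m[n]|$, so by the identity just proved it suffices to evaluate $|\Bal(m\Lin^c)[n]|$. Writing $\Bal(m\Lin^c)=\Bal\circ(m\Lin^c)$ and applying Lemma~\ref{lemma_species_comp} with $F=\Bal$ and $G=m\Lin^c$ produces $|\Bal(m\Lin^c)[n]|=\sum_{k=0}^n|\Bal[k]|\cdot|E_k(m\Lin^c)[n]|$, where $|\Bal[k]|=\fubini(k)$ by definition of the Fubini numbers.

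The crux is therefore to show $|E_k(m\Lin^c)[n]|=m^k\stone{n}{k}$, and this is the step I expect to require the most care. The idea is that an $E_k(m\Lin^c)$-structure on $[n]$ is a set of exactly $k$ connected linear orders partitioning $[n]$, each decorated with one of $m$ colors. Since $\Lin^c(x)=\log(1/(1-x))=\Cyc(x)$, the species $\Lin^c$ and $\Cyc$ are equal as $\LL$-species, whence $E_k(\Lin^c)=E_k(\Cyc)=\Set_k(\Cyc)$ and $|\Set_k(\Cyc)[n]|=\stone{n}{k}$; the $k$ independent color choices then contribute the factor $m^k$. Substituting $|E_k(m\Lin^c)[n]|=m^k\stone{n}{k}$ and $|\Bal[k]|=\fubini(k)$ into the sum above and dividing by $n!$ gives exactly $|\Genmat_m[n]|=\tfrac{1}{n!}\sum_{k=0}^n\stone{n}{k}m^k\fubini(k)$, as desired.
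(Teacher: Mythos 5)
Your proof is correct, but it reaches the species identity by a genuinely different route than the paper. Where you invoke the $\LL$-species principle $F=G\Leftrightarrow F(x)=G(x)$ and simply check that both $\LCircStr(x)$ and $\Bal(m\Lin^c)(x)$ equal $1/\bigl(2-(1-x)^{-m}\bigr)$, the paper proves $\LCirc=\Bal(m\Lin^c)$ by a chain of structural identities: $\Lin^m=\Set(\Lin^c)^m=\Set(m\Lin^c)$, hence $\Lin\circ(\Lin^m-1)=\Lin\circ\Set_+\circ(m\Lin^c)=\Bal(m\Lin^c)$. Your computation is shorter and entirely mechanical, but it is non-constructive and leans on a feature peculiar to $\LL$-species; the paper's manipulation is independent of that equivalence and is immediately afterwards upgraded to an explicit bijection (splitting matrix entries into atoms and recording row indices as colors), which is precisely what the paper then adapts to prove Proposition~\ref{prop_spec_burmat} for $\Sym\times\Mat$, where the restriction to matrices with no empty rows is hard to see at the level of generating functions alone. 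The counting step also differs in bookkeeping: the paper regroups the composition as $\Bal(mX)\circ\Lin^c$, applies Lemma~\ref{lemma_species_comp} with the outer species $\Bal(mX)$, and moves the colors into that outer structure via $F(mX)=F\times E(mX)$, so that $|\Bal(mX)[k]|=m^k\fubini(k)$; you instead apply the lemma with $F=\Bal$, $G=m\Lin^c$ and factor $m^k$ out of $|E_k(m\Lin^c)[n]|$, using $\Lin^c=\Cyc$ (again justified via generating functions) to get $|E_k(\Lin^c)[n]|=\stone{n}{k}$ --- two equivalent and equally rigorous ways of distributing the same factor, yours keeping the colors on the connected linear orders, the paper's attaching them to the blocks of the ballot.
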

\begin{proof}
  By Lemma~\ref{lemma_LxBur} it suffices to prove that
  $\LCirc=\Bal(m\Lin^c)$.  We have
  \begin{align*}
    \LCirc
    &= \Lin\circ\bigl(\Lin^m-1\bigr)  \\
    &= \Lin\circ\bigl(\Set(\Lin^c)^m-1\bigr) \\ 
    &= \Lin\circ\bigl(\Set(m\Lin^c)-1\bigr) \\ 
    &=\Lin\circ\Set_+\circ (m\Lin^c) \\
    &=\Bal(m\Lin^c).
    \shortintertext{Furthermore,}
    n!\bigl|\Genmat_{m}[n]\bigr|
    &= \bigl|(\Bal(mX)\circ\Lin^c)[n]\bigr| \\
    &= \sum_{k=0}^n\,|\Bal(mX)[k]|\cdot |E_k(\Lin^c)[n]| \\ 
    &= \sum_{k=0}^n\,|(\Bal\times E(mX))[k]|\cdot |E_k(\Lin^c)[n]| \\
    &=\sum_{k=0}^n m^k\fubini(k) \stone{n}{k},
  \end{align*}
  where the second equality follows from Lemma~\ref{lemma_species_comp},
  and the third equality uses the general identity
  $F(mX)=F \times E(mX)$ for any species $F$.
\end{proof}

In the proof of Proposition~\ref{prop_bal_mLc} we derived the identity
$\LCirc=\Bal(m\Lin^c)$ by a sequence of species equations. We shall now
give an alternative bijective proof of this identity.  To an
$\LCircStr$-structure $M$ we associate a $\Bal(m\Lin^c)$-structure $B$
as follows.  Each entry of the matrix $M$ is a linear order and by
splitting each entry according to its left-to-right minima we obtain a
set of connected linear orders.  For instance, consider the matrix
$$
M=\begin{bmatrix}
\cdot & 5 & 23\\
784 & \cdot & \cdot\\
\cdot & \cdot & \cdot\\
6 & \cdot & 91
\end{bmatrix}\in\LCircStr[9]
$$
of Example~\ref{example_wA}. Here, $784$ splits into $78|4$ and $91$
splits into $9|1$, and the resulting set of connected linear orders is
$$
\lbrace 78,4,6,5,23,9,1 \rbrace.
$$
Let us call a connected linear order arising this way an \emph{atom}. To
determine $B$, we need to place a $(\Bal\times\Set(mX))$-structure on
the set of atoms. The ballot is defined by collecting in the $j$th block
the atoms contained in the $j$th column of $M$.  To place an
$\Set(mX)$-structure means to assign a color in $[m]$, and we let the
color of each atom be equal to its row index in $M$.  Here, we obtain
$$
B=
\lbrace 78_2,4_2,6_4\rbrace
\lbrace 5_1\rbrace
\lbrace 23_1,9_4,1_4\rbrace\in\Bal(m\Lin^c)[9],
$$
where colors are indicated by the subscripts.  To prove that the map
$M\mapsto B$ is a bijection, we describe its inverse. Given $B$, the
$(i,j)$th entry of $M$ is the linear order obtained by sorting the
$i$-colored atoms in the $j$th block of $B$ decreasingly with respect to
their first letter and then concatenating them.

By composing the map of Lemma~\ref{lemma_LxBur} with the map
described above, we obtained a bijection between $\Sym\times\Genmat_m$
and $\Bal(m\Lin^c)$, which leads to a formula for $|\Genmat_m[n]|$.
It is easy to adjust this approach to prove an analogous equation
for Burge matrices and obtain a formula for $|\Mat[n]|$.

\begin{proposition}\label{prop_spec_burmat}
For each $n\ge 0$,
$$
\Sym\times\Mat=
\left(\Bal\times\Bal\right)\circ\Lin^c
\quad\text{and}\quad
|\Mat[n]|=\frac{1}{n!}\sum_{k=0}^n\fubini(k)^2\stone{n}{k}.
$$
\end{proposition}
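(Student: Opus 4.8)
The plan is to mirror the proof of Proposition~\ref{prop_bal_mLc}, the only change being conceptual: the $m$ rows of a $\Genmat_m$-structure were encoded by an $m$-colouring of the atoms, i.e.\ an $E(mX)$-structure, but the rows of a Burge matrix are variable in number, ordered top-to-bottom, and each nonempty---which is exactly a \emph{ballot}. Thus, where the row side of $\Genmat_m$ contributed the factor $E(mX)$ in the identity $(\Bal\times E(mX))\circ\Lin^c=\Bal(m\Lin^c)$, the row side of $\Mat$ should contribute a second factor $\Bal$, producing $(\Bal\times\Bal)\circ\Lin^c$.

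First I would set up the species $\Mat$ of sorted Burge matrices exactly as in Definition~\ref{def:Genmat}, except that the number of rows is now arbitrary and one imposes the extra condition that \emph{every row}, as well as every column, contains a nonempty linear order; taking the length of each entry recovers the integer Burge matrices, so $|\Mat[n]|$ is the quantity to be counted. Letting $\Sym$ act by $w\cdot A$ as before, the map $(w,A)\mapsto w\cdot A$ is a size-preserving bijection from $\Sym\times\Mat$ onto the species of all matrices with disjoint linear-order entries whose product is a linear order of $\ell$ and whose every row and column is nonempty, with inverse $M\mapsto(\prod M,\ (\prod M)^{-1}\cdot M)$. This is the verbatim analogue of Lemma~\ref{lemma_LxBur}.

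Next I would run the atom-splitting bijection from the alternative proof of Proposition~\ref{prop_bal_mLc}. Splitting every entry of such a matrix $M$ at its left-to-right minima produces a set of atoms (connected linear orders), and each atom sits in a well-defined cell $(i,j)$. Recording, on the set of atoms, the partition into rows ordered top-to-bottom together with the partition into columns ordered left-to-right yields a \emph{pair} of ballots on the atoms, i.e.\ a $(\Bal\times\Bal)\circ\Lin^c$-structure; the row-nonempty and column-nonempty conditions are precisely what make each of the two structures a genuine ballot, and the two ballots may be specified independently on the same atom set. The inverse places each atom in the cell determined by its row-block and column-block and, within each cell, concatenates the atoms with their minima in decreasing order, exactly as in the $\Genmat_m$ case. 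This establishes $\Sym\times\Mat=(\Bal\times\Bal)\circ\Lin^c$.

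Finally, applying Lemma~\ref{lemma_species_comp} with $F=\Bal\times\Bal$ and $G=\Lin^c$ gives
\[
  n!\,\bigl|\Mat[n]\bigr|
  =\bigl|((\Bal\times\Bal)\circ\Lin^c)[n]\bigr|
  =\sum_{k=0}^n\bigl|(\Bal\times\Bal)[k]\bigr|\cdot\bigl|E_k(\Lin^c)[n]\bigr|,
\]
and I would then substitute $|(\Bal\times\Bal)[k]|=|\Bal[k]|^2=\fubini(k)^2$ (the cartesian product multiplies cardinalities) together with $|E_k(\Lin^c)[n]|=\stone{n}{k}$, the latter already used in Proposition~\ref{prop_bal_mLc}. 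Dividing by $n!$ yields the claimed formula. I expect the only genuine obstacle to be the bookkeeping in the middle steps: one must check that ``every row is nonempty'' corresponds to the outer $\Lin\circ\Set_+$ of the \emph{second} $\Bal$ (rather than to a colouring), and that the row and column ballots are read off independently from the same set of atoms. Once this is verified, the counting step is immediate from the machinery already in place.
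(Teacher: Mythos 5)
Your proposal is correct and follows essentially the same route as the paper: it reuses the bijection $(w,A)\mapsto w\cdot A$ of Lemma~\ref{lemma_LxBur}, observes that the nonempty-row condition upgrades the $\Set(mX)$-structure on atoms to a second ballot in the atom-splitting bijection, and then applies Lemma~\ref{lemma_species_comp} with $|(\Bal\times\Bal)[k]|=\fubini(k)^2$ and $|E_k(\Lin^c)[n]|=\stone{n}{k}$. This matches the paper's proof step for step, including the description of the inverse map via intersecting row-blocks and column-blocks.
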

\begin{proof}
In Lemma~\ref{lemma_LxBur} and in the previous paragraph we
gave two bijections
$$
(w,A)\longleftrightarrow M
\quad\text{and}\quad
M\longleftrightarrow B,
$$
where $(w,A)$ is an $\left(\Sym\times\Genmat_m\right)$-structure,
$M$ is an $\LCircStr$-structure, and $B$ is a $\Bal(m\Lin^c)$-structure.
Recall that a matrix $A$ of $\Genmat_m$ is a Burge matrix if and
only if it has no empty rows, which in turn is equivalent to
each row of $M$ containing at least one nonempty linear
order.
In terms of $\Bal(m\Lin^c)$, the structure determining the row
indices of the atoms in each column is more restrictive than $\Set(mX)$,
since empty rows are not allowed. In fact, it is the same structure as
the one used for the (nonempty) columns, that is, the ballot defined
by collecting in the $i$th block the atoms contained in the $i$th
row of~$A$. Conversely, the $(i,j)$th entry of~$A$ contains the linear
order obtained by juxtaposing the atoms that are simultaneously in the
$i$th block of the ballot determining the row indices and in the $j$th
block of the ballot determining the column indices.
Consequently, a species identity for $\Sym\times\Mat$ is obtained
by simply replacing $\Set(mX)$ with $\Bal$:
\begin{align*}
\Sym\times\Mat &=
\left(\Bal\times\Bal\right)\circ\Lin^c.
\intertext{In particular,}
  n!|\Mat[n]|
  &= \bigl|\bigl(\bigl(\Bal\times\Bal\bigr)\circ\Lin^c\bigr)[n]\bigr| \\[0.8ex]
  &= \sum_{k=0}^n\,\bigl|\bigl(\Bal\times\Bal\bigr)[k]\bigr|\cdot \bigl|E_k(\Lin^c)[n]\bigr| && \text{(by Lemma~\ref{lemma_species_comp})}\\
  & =\sum_{k=0}^n\fubini(k)^2\stone{n}{k}.&&\qedhere
\end{align*}
\end{proof}

We wish to refine the previous result to obtain a formula
for the (weak) two-sided Caylerian polynomials
$$
\twocayhat_n(s,t)=\sum_{A\in\Mat[n]}s^{\rows(A)}t^{\cols(A)}
$$
defined in Section~\ref{sec_caylerian_polys}.
Let $\Mat^{s,t}$ be the ($\ZZ[s,t]$-weighted) species of Burge matrices where
every row is marked by~$s$ and every column is marked by~$t$. More
formally, we assign to a matrix $A\in\Mat[n]$ the weight
$s^{\rows(A)}t^{\cols(A)}$, so that
\begin{equation}\label{eq_weightmat}
\twocayhat_n(s,t)=\sum_{A\in\Mat[n]}s^{\rows(A)}t^{\cols(A)}
=|\Mat^{s,t}[n]|
\end{equation}
is the total weight of all the matrices in $\Mat[n]$.
From the proof of Proposition~\ref{prop_spec_burmat} it follows that the joint
distribution of rows and columns on Burge matrices can be obtained by
tracking the number of blocks in $(\Bal\times\Bal)$-structures. Let $\Bal^t$ be
the species of ballots where each block is marked by~$t$. Then
$$
|\Bal^t[n]|=\sum_{w\in\Bal[n]}t^{\blocks(w)}
\quad\text{and}\quad
\Bal^t(x)=\Lin(t\Set_+)(x)=\frac{1}{1-t(e^x-1)}.
$$
Expressed differently, $\Bal^t=L\circ tX \circ E_+$, where $tX$ denotes
the species of singletons where each singleton is marked with a $t$. In
particular, the equation
\begin{align}
\Sym\times\Mat &= \left(\Bal\times\Bal\right)\circ\Lin^c\nonumber
\shortintertext{is refined by}
\Sym\times\Mat^{s,t} &= \left(\Bal^s\times\Bal^t\right)\circ\Lin^c.\label{eq_symXmatst}
\end{align}
Using Equation~\eqref{eq_weightmat}, we immediately obtain the
following identity for the weak two-sided Caylerian polynomials:

\begin{theorem}\label{thm_twocay}
  We have
  \[
    \twocayhat_n(s,t) =
    \frac{1}{n!}\sum_{k=0}^n\stone{n}{k}\bigl|\Bal^s[k]\bigr|\bigl|\Bal^t[k]\bigr|.
  \]
\end{theorem}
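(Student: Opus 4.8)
The plan is to take weighted cardinalities on both sides of the refined species identity~\eqref{eq_symXmatst} and read off the claimed formula, essentially repeating the computation at the end of the proof of Proposition~\ref{prop_spec_burmat} but now carrying the markings by $s$ and $t$ along. First I would record the left-hand side: since $\Sym$ carries no weight and $|\Sym[n]|=n!$, the cartesian product rule gives $|(\Sym\times\Mat^{s,t})[n]| = n!\,|\Mat^{s,t}[n]| = n!\,\twocayhat_n(s,t)$, where the last equality is Equation~\eqref{eq_weightmat}.

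Next I would expand the right-hand side $(\Bal^s\times\Bal^t)\circ\Lin^c$ using the composition formula. Since a connected linear order is nonempty we have $\Lin^c[\emptyset]=\emptyset$, so Lemma~\ref{lemma_species_comp} applies with $F=\Bal^s\times\Bal^t$ and $G=\Lin^c$, yielding
\[
|((\Bal^s\times\Bal^t)\circ\Lin^c)[n]| = \sum_{k=0}^n |(\Bal^s\times\Bal^t)[k]|\cdot|E_k(\Lin^c)[n]|.
\]
The two remaining factors are then identified exactly as in Proposition~\ref{prop_spec_burmat}: the cartesian product gives $|(\Bal^s\times\Bal^t)[k]| = |\Bal^s[k]|\cdot|\Bal^t[k]|$, and $|E_k(\Lin^c)[n]| = \stone{n}{k}$ counts linear orders of $[n]$ with $k$ left-to-right minima (equivalently, sets of $k$ connected linear orders covering $[n]$). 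Equating the two sides and dividing by $n!$ then gives the theorem.

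The one point that deserves care---and the only place where anything beyond Proposition~\ref{prop_spec_burmat} is needed---is checking that Lemma~\ref{lemma_species_comp} remains valid for the $\ZZ[s,t]$-weighted species $\Bal^s\times\Bal^t$. I expect this to be the only (mild) obstacle, and it is unproblematic because the markings live entirely on the outer structure: the $s$-weight records blocks of the first ballot (the row indices of the matrix) and the $t$-weight records blocks of the second (the column indices), while the inner $\Lin^c$-atoms carry no weight. Hence the weight factors out of each summand indexed by the number $k$ of atoms as $|(\Bal^s\times\Bal^t)[k]|$, and the derivation of Lemma~\ref{lemma_species_comp} goes through verbatim with weighted cardinalities in place of plain ones. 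Everything else being a direct substitution, I do not anticipate any genuine difficulty.
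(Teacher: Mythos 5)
Your proposal is correct and follows essentially the same route as the paper, which presents Theorem~\ref{thm_twocay} as an immediate consequence of the refined identity~\eqref{eq_symXmatst} together with Equation~\eqref{eq_weightmat}, reading off cardinalities exactly as in the proof of Proposition~\ref{prop_spec_burmat}. Your explicit check that Lemma~\ref{lemma_species_comp} carries over to the $\ZZ[s,t]$-weighted setting is a point the paper leaves implicit, and it is handled correctly.
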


This identity, in turn, leads to the explicit formula for $C_n(t)$:

\begin{theorem}\label{thm_caylerian_explicit}
  The $n$th Caylerian polynomial satisfies
  \[
    C_n(t) =
    \frac{1}{n!}\sum_{k=0}^n\stone{n}{k}\fubini(k)\sum_{i=0}^k\sttwo{k}{i}i!(t-1)^{n-i},
  \]
  where $\sttwo{n}{k}=|\Set_k(\Set_+)[n]|$ denotes the $(n,k)$th
  Stirling number of the second kind, counting set partitions of $[n]$
  with $k$ blocks.
\end{theorem}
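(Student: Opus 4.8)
The plan is to reduce everything to Theorem~\ref{thm_twocay} via the specialization relating the two-sided polynomials to $C_n(t)$ that was recorded in Section~\ref{sec_caylerian_polys}, namely
\[
C_n(t)=(t-1)^n\twocayhat_n\!\left(1,\frac{1}{t-1}\right).
\]
So the first step is to substitute $s=1$ and $t\mapsto 1/(t-1)$ into the formula of Theorem~\ref{thm_twocay} and then multiply through by $(t-1)^n$.

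Two ingredients are needed to carry this out. First, setting $s=1$ in $\Bal^s$ removes the marking of blocks, so that $|\Bal^1[k]|=|\Bal[k]|=\fubini(k)$ is simply the $k$th Fubini number. Second, I would expand $|\Bal^t[k]|$ explicitly as a polynomial in $t$. Since $\Bal=\Lin\circ\Set_+$, a ballot of $[k]$ with exactly $i$ blocks is an unordered set partition into $i$ blocks together with a linear order on those blocks; there are therefore $\sttwo{k}{i}\,i!$ such ballots, whence
\[
|\Bal^t[k]|=\sum_{i=0}^k\sttwo{k}{i}\,i!\,t^{i}.
\]
Evaluating at $t=1/(t-1)$ turns each $t^{i}$ into $(t-1)^{-i}$.

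Putting the pieces together, Theorem~\ref{thm_twocay} specializes to
\[
\twocayhat_n\!\left(1,\frac{1}{t-1}\right)
=\frac{1}{n!}\sum_{k=0}^n\stone{n}{k}\fubini(k)\sum_{i=0}^k\sttwo{k}{i}\,i!\,(t-1)^{-i},
\]
and multiplying by $(t-1)^n$ collapses $(t-1)^{n}(t-1)^{-i}$ into $(t-1)^{n-i}$, yielding exactly the claimed identity.

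I do not expect a genuine obstacle here: the argument is essentially bookkeeping once Theorem~\ref{thm_twocay} is in hand. The only step requiring an actual (if elementary) combinatorial argument is the expansion of $|\Bal^t[k]|$ in terms of Stirling numbers of the second kind, and even that is immediate from the $\Bal=\Lin\circ\Set_+$ description of ballots. The one point to watch is that the substitution $t\mapsto 1/(t-1)$ produces negative powers of $(t-1)$, which must be tracked carefully so that the outer factor $(t-1)^n$ exactly cancels them down to $(t-1)^{n-i}$ with $0\le i\le k\le n$, keeping all exponents nonnegative.
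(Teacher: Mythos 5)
Your proposal is correct and follows essentially the same route as the paper: both start from $C_n(t)=(t-1)^n\twocayhat_n\bigl(1,\tfrac{1}{t-1}\bigr)$ (cited from the prequel), specialize Theorem~\ref{thm_twocay} with $|\Bal^1[k]|=\fubini(k)$ and the expansion $|\Bal^t[k]|=\sum_{i=0}^k\sttwo{k}{i}\,i!\,t^i$, and then multiply through by $(t-1)^n$. The only difference is that you spell out the elementary justification of the ballot expansion via $\Bal=\Lin\circ\Set_+$, which the paper leaves implicit.
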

\begin{proof}
  We have
  \begin{align*}
    C_n(t)
    &=(t-1)^n\twocayhat_n\biggl(1,\frac{1}{t-1}\biggr) && \text{\cite[Corollary 6.5]{CC2}} \\
    &=(t-1)^n\frac{1}{n!}\sum_{k=0}^n\stone{n}{k}\bigl|\Bal^1[k]\bigr|\bigl|\Bal^{\frac{1}{t-1}}[k]\bigr| \\
    &=(t-1)^n\frac{1}{n!}\sum_{k=0}^n\stone{n}{k}\fubini(k)
      \!\left(\sum_{i=0}^k\sttwo{k}{i}i!\frac{1}{(t-1)^i}\right) && \qedhere
  \end{align*}
\end{proof}


The results proven thus far can be extended to binary matrices,
as we sketch below.

\begin{proposition}\label{prop_enum_bgenm}
For each $n,m\ge 0$,
$$
\Sym\times\BGenmat_m=\Lin\bigl((1+X)^m-1\bigr)
\quad\text{and}\quad
|\BGenmat_{m}[n]|=\sum_{\alpha\models n}\prod_{a\in\alpha}\binom{m}{a}.
$$
\end{proposition}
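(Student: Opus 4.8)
The plan is to mirror, in the binary setting, the chain of arguments that produced Lemma~\ref{lemma_LxBur} and the formula~\eqref{eq_genmat}. Here $\BGenmat_m$ is the binary analogue of $\Genmat_m$: those $m$-row matrices of Definition~\ref{def:Genmat} whose every entry is a linear order of length at most one, so each entry is either $\eps$ or a single point. First I would identify the species of a single binary entry: a linear order of length $0$ or $1$ is precisely a $(1+X)$-structure. Consequently a binary column of height $m$ is a $(1+X)^m$-structure, and a \emph{nonempty} binary column---one with at least one nonzero entry---is a $\bigl((1+X)^m-1\bigr)$-structure, the subtracted $1$ removing the all-empty column (the unique structure on $\emptyset$). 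This is the exact binary counterpart of replacing each $\Lin$ inside a column by $1+X$, turning $(\Lin^m)_+$ into $(1+X)^m-1$.

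Next I would establish the species identity $\Sym\times\BGenmat_m=\Lin\bigl((1+X)^m-1\bigr)$ by restricting the bijection of Lemma~\ref{lemma_LxBur} to binary matrices. The permutation action $A\mapsto w\cdot A$ relabels the letters inside each entry and hence preserves the length of every entry; in particular it sends binary matrices to binary matrices. Thus $(w,A)\mapsto w\cdot A$, with inverse $M\mapsto\bigl(\prod M,\ (\prod M)^{-1}\cdot M\bigr)$, restricts to a size-preserving bijection between $\bigl(\Sym\times\BGenmat_m\bigr)$-structures and the structures obtained by placing a nonempty binary column on each block of a set partition and then a linear order on the blocks---the binary version of the characterization in Lemma~\ref{lemma:matrix-characterization}, which is exactly $\Lin\bigl((1+X)^m-1\bigr)$.

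Finally I would extract the counting formula just as in the derivation of~\eqref{eq_genmat}. Since $(1+X)^m$ has generating function $(1+x)^m=\sum_a\binom{m}{a}x^a$, a block $B$ of size $a\ge 1$ carries $\bigl|\bigl((1+X)^m-1\bigr)[B]\bigr|=a!\binom{m}{a}$ structures, mirroring $|\Lin^m[a]|=a!\multiset{m}{a}$ from~\eqref{eq_multiset}. Using $|\Sym[n]|=n!$ together with the composition formula~\eqref{comp_formula} and grouping set partitions by their composition $\alpha$ of block sizes gives
\begin{align*}
  n!\,|\BGenmat_m[n]|
  &=\bigl|\bigl(\Lin\bigl((1+X)^m-1\bigr)\bigr)[n]\bigr|
  =\sum_{\beta\in\Par[n]}|\Lin[\beta]|\prod_{B\in\beta}\bigl|\bigl((1+X)^m-1\bigr)[B]\bigr|\\
  &=\sum_{\alpha\models n}\binom{n}{\alpha}\prod_{a\in\alpha}a!\binom{m}{a}
  =n!\sum_{\alpha\models n}\prod_{a\in\alpha}\binom{m}{a},
\end{align*}
where the last step uses $\binom{n}{\alpha}=n!/\prod_{a\in\alpha}a!$; dividing by $n!$ yields the claim. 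The only point needing care---and the closest thing to an obstacle---is verifying that the binary constraint meshes cleanly with the earlier constructions, namely that $1+X$ is the correct entry species and that $A\mapsto w\cdot A$ genuinely preserves the binary property. Both are immediate once one observes that $w\cdot A$ is length-preserving on entries, so the adaptation is otherwise routine.
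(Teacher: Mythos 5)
Your proposal is correct and follows essentially the same route as the paper: restrict the bijection of Lemma~\ref{lemma_LxBur} to binary matrices (noting the action $A\mapsto w\cdot A$ preserves entry lengths), identify each nonempty binary column as a $\bigl((1+X)^m-1\bigr)$-structure to get the species identity, and then rerun the derivation of Equation~\eqref{eq_genmat} with $\multiset{m}{a}$ replaced by $\binom{m}{a}$. Your write-up just spells out the block-size computation $\bigl|\bigl((1+X)^m-1\bigr)[B]\bigr|=a!\binom{m}{a}$ that the paper leaves implicit.
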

\begin{proof}
By restricting the size-preserving bijection from
$\left(\Sym\times\Genmat_m\right)$-structures to
$\LCircStr$-structures described in Lemma~\ref{lemma_LxBur}
to pairs $(w,A)$ where $A$ is binary, one obtains matrices
$M=w\cdot A$ whose entries are linear orders of size at most one.
In other words, each column is a nonempty $(1+X)^m$-structure
and we get
$$
\Sym\times\BGenmat_m=\Lin\bigl((1+X)^m-1\bigr).
$$
The proof of the equation for $|\BGenmat_{m}[n]|$ is similar to
the proof of Equation~\eqref{eq_genmat} for $|\Genmat_m[n]|$.
The multichoose coefficient
$\multiset{m}{n}$
is, however, replaced by the binomial coefficient
$\binom{m}{n}$
since the generating function
$1/(1-x)^m=\sum_{n\ge 0}\multiset{m}{n}x^n$
is replaced by the generating function
$(1+x)^m=\sum_{n\ge 0}\binom{m}{n}x^n$.
\end{proof}

Next, we wish to compute the \emph{ordinary} generating functions of the
two sequences $|\Genmat_{m}[n]|$ and $|\BGenmat_{m}[n]|$. In general, if
$F$ is a species, then the ordinary generating function
$\sum_{n\geq 0}|F[n]|x^n$ is equal to the (exponential) generating
function of $\Sym\times F$:
\[\sum_{n\geq 0}|F[n]|x^n = \sum_{n\geq 0}n!|F[n]|\frac{x^n}{n!} = (\Sym\times F)(x).
\]
\begin{proposition}\label{prop:ogf_genmat}
  We have
  \[
    \sum_{n\ge 0}|\Genmat_{m}[n]|x^n  =\frac{(1-x)^m}{2(1-x)^m-1}\quad\text{and}\quad
    \sum_{n\ge 0}|\BGenmat_{m}[n]|x^n =\frac{1}{2-(1+x)^m}.
  \]
\end{proposition}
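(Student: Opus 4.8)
The plan is to pass from the ordinary generating functions in the statement to exponential generating functions of species we have already identified, and then apply the composition rule $(F\circ G)(x)=F(G(x))$. The bridge is the identity $\sum_{n\ge 0}|F[n]|x^n=(\Sym\times F)(x)$ noted immediately before the statement: it turns each ordinary generating function we seek into the exponential generating function of $\Sym\times\Genmat_m$ or $\Sym\times\BGenmat_m$, respectively.

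For the first formula I would invoke Lemma~\ref{lemma_LxBur}, which gives $\Sym\times\Genmat_m=\Lin\circ(\Lin^m)_+$. Hence the ordinary generating function equals $\Lin\bigl((\Lin^m)_+(x)\bigr)$. Substituting $\Lin(x)=1/(1-x)$ together with $(\Lin^m)_+(x)=\Lin^m(x)-1=1/(1-x)^m-1$ and clearing denominators yields
\[
\frac{1}{1-\bigl(1/(1-x)^m-1\bigr)}=\frac{1}{2-1/(1-x)^m}=\frac{(1-x)^m}{2(1-x)^m-1}.
\]

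For the second formula I would proceed identically, this time using Proposition~\ref{prop_enum_bgenm}, which gives $\Sym\times\BGenmat_m=\Lin\bigl((1+X)^m-1\bigr)$. Since the inner species has generating function $(1+x)^m-1$, composing with $\Lin(x)=1/(1-x)$ produces
\[
\frac{1}{1-\bigl((1+x)^m-1\bigr)}=\frac{1}{2-(1+x)^m}.
\]

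I expect no real obstacle here: given the species identities of Lemma~\ref{lemma_LxBur} and Proposition~\ref{prop_enum_bgenm}, both results reduce to the composition rule and a line of elementary algebra. The only subtlety worth flagging is that the composition $F\circ G$ requires $G[\emptyset]=\emptyset$, so that $G(x)$ carries no constant term; this is precisely why the inner species are $(\Lin^m)_+$ and $(1+X)^m-1$ rather than $\Lin^m$ and $(1+X)^m$, and it is what makes the power-series substitution legitimate in both cases.
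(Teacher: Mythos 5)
Your proof is correct and follows exactly the paper's own argument: both convert the ordinary generating functions to $(\Sym\times\Genmat_m)(x)$ and $(\Sym\times\BGenmat_m)(x)$, invoke Lemma~\ref{lemma_LxBur} and Proposition~\ref{prop_enum_bgenm} respectively, and finish with the composition rule and the same elementary algebra. Your remark about the inner species needing no structures on the empty set (hence $(\Lin^m)_+$ and $(1+X)^m-1$) is a valid point the paper leaves implicit.
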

\begin{proof}
  Using Lemma~\ref{lemma_LxBur} and Proposition~\ref{prop_enum_bgenm},
  respectively, we get
  \begin{align*}
    \sum_{n\ge 0}|\Genmat_{m}[n]|x^n
    &=\bigl(\Sym\times\Genmat_m\bigr)(x)\\[-1.4ex]
    &=\LCircStr(x) \\[0.6ex]
    &=\frac{1}{1-\bigl((1-x)^{-m}-1\bigr)}
    =\frac{(1-x)^m}{2(1-x)^m-1}
  \shortintertext{and}
    \sum_{n\ge 0}|\BGenmat_{m}[n]|x^n
    &=\bigl(\Sym\times\BGenmat_m\bigr)(x) \\[-1.4ex]
    &=\Lin\bigl((1+x)^m-1\bigr) \\[0.6ex]
    &=\frac{1}{1-\bigl((1+x)^m-1\bigr)}
    =\frac{1}{2-(1+x)^m}.&&\qedhere
  \end{align*}
\end{proof}

To prove the next result, we will need two equations involving the
weak and strict Caylerian polynomials~\cite[Theorem 7.6]{CC2}:
\begin{align}
\frac{tC_n(t)}{(1-t)^{n+1}}&=\sum_{m\ge 1}|\Genmat_{m}[n]|t^m;
\label{eq_caypoly1}\\
\frac{tC^{\circ}_n(t)}{(1-t)^{n+1}}&=\sum_{m\ge 1}|\BGenmat_{m}[n]|t^m.
\label{eq_caypoly2}
\end{align}
These equations were originally stated in terms of (generalized)
Burge words. For convenience, we have here reformulated them in
terms of the equinumerous structures $\Genmat_m$ and $\BGenmat_m$.
We note that identities~\eqref{eq_caypoly1} and \eqref{eq_caypoly2}
have the same flavor as the following identity due to
Carlitz~\cite{C2}, often used as an alternative definition of the
Eulerian polynomials $A_n(t)$:
$$
\frac{tA_n(t)}{(1-t)^{n+1}} = \sum_{m \geq 1}m^nt^m.
$$

\begin{theorem}\label{theorem_genfun_n}
We have
\begin{align*}
  \sum_{n\ge 0}\frac{tC_n(t)}{(1-t)^{n+1}}x^n
  &= \sum_{m\ge 1}\frac{(1-x)^{m}}{2(1-x)^{m}-1}t^m
\shortintertext{and}
  \sum_{n\ge 0}\frac{tC^{\circ}_n(t)}{(1-t)^{n+1}}x^n
  &= \sum_{m\ge 1}\frac{1}{2-(1+x)^{m}}t^m.
\end{align*}
\end{theorem}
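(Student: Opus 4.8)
The plan is to read off both identities directly from the two ingredients already in place: the expansions \eqref{eq_caypoly1} and \eqref{eq_caypoly2}, which express $tC_n(t)/(1-t)^{n+1}$ and $tC^{\circ}_n(t)/(1-t)^{n+1}$ as power series in $t$ whose coefficients are the matrix counts $|\Genmat_m[n]|$ and $|\BGenmat_m[n]|$, together with the closed forms for the ordinary generating functions of these counts supplied by Proposition~\ref{prop:ogf_genmat}. No new combinatorics is required; the work is entirely a manipulation of formal power series.

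First I would handle the weak case. Substituting \eqref{eq_caypoly1} into the left-hand side turns it into the double sum
\[
\sum_{n\ge 0}\frac{tC_n(t)}{(1-t)^{n+1}}x^n
= \sum_{n\ge 0}\sum_{m\ge 1}|\Genmat_m[n]|\,t^m x^n.
\]
The decisive step is to interchange the order of summation, grouping the terms by powers of $t$ instead of powers of $x$, which gives
\[
\sum_{m\ge 1}t^m\sum_{n\ge 0}|\Genmat_m[n]|\,x^n.
\]
Applying the first formula of Proposition~\ref{prop:ogf_genmat} to the inner sum yields exactly $\sum_{m\ge 1}\frac{(1-x)^m}{2(1-x)^m-1}t^m$, the claimed right-hand side. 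The strict identity then follows by the identical argument, replacing \eqref{eq_caypoly1} with \eqref{eq_caypoly2}, the counts $|\Genmat_m[n]|$ with $|\BGenmat_m[n]|$, and the first formula of Proposition~\ref{prop:ogf_genmat} with the second.

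The only point that needs justification is the legitimacy of the rearrangement, and this is where I would be careful rather than cavalier. I would argue inside the ring $\mathbb{Q}[[x,t]]$ of formal power series in the two variables: the coefficient of $x^n t^m$ in the double sum is the single nonnegative integer $|\Genmat_m[n]|$, and for each fixed total degree only finitely many pairs $(n,m)$ contribute, so the family is summable in the $(x,t)$-adic topology and the two iterated sums represent the same element. Once this bookkeeping is recorded, there is no genuine obstacle remaining, and the theorem is an immediate consequence of the two results cited.
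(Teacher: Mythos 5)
Your proposal is correct and matches the paper's proof essentially verbatim: substitute Equation~\eqref{eq_caypoly1} (resp.~\eqref{eq_caypoly2}) into the left-hand side, interchange the order of summation, and apply Proposition~\ref{prop:ogf_genmat} to the inner sum. The only difference is your explicit justification of the interchange in $\mathbb{Q}[[x,t]]$, which the paper leaves implicit; this is a harmless (and correct) addition.
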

\begin{proof}
Using Proposition~\ref{prop:ogf_genmat} and Equation~\eqref{eq_caypoly1},
\begin{align*}
\sum_{n\ge 0}\frac{tC_n(t)}{(1-t)^{n+1}}x^n
&=\sum_{n\ge 0}\sum_{m\ge 1}|\Genmat_{m}[n]|t^{m}x^n\\
&=\sum_{m\ge 1}\left(\sum_{n\ge 0}|\Genmat_{m}[n]|x^n\right)t^{m}
=\sum_{m\ge 1}\frac{(1-x)^m}{2(1-x)^m-1}t^{m}.
\end{align*}
The second identity is similarly obtained by
Proposition~\ref{prop:ogf_genmat} and Equation~\eqref{eq_caypoly2}.
\end{proof}

\section{Cayley permutations with a prescribed ascent set}\label{section_SRI_1}

In Section~\ref{sec_caylerian_polys}, we defined the weak ascent set
$\Asc(w)$ and the strict ascent set $\Asc^{\circ}(w)$ of a Cayley
permutation~$w$, and we shall use the same definitions here in the
context of linear orders.
Let~$n$ be a positive integer. Consider a subset
$S=\lbrace s_1,\dots,s_r\rbrace$ of $[n-1]$, with
$s_1<s_2<\dots<s_r$, and let
$$
\alphas=|\lbrace w\in\Lin[n]:\Asc(v)\subseteq S\rbrace|
$$
be the number of linear orders on $[n]$ whose ascent set is contained
in~$S$. Then (see MacMahon~\cite[vol.~1, p.~190]{MacMahon} or Stanley~\cite[Proposition 1.4.1]{St}),
$\alphas$ is a polynomial in $n$ given by the multinomial coefficient
\[
\alphas
  = \binom{n}{s_1,s_2-s_1,s_3-s_2,\dots,n-s_r}
  = \frac{n!}{s_1!(s_2-s_1)!(s_3-s_2)!\cdots(n-s_r)!}.
\]
Indeed, to obtain any $w\in\Lin[n]$ with $\Asc(w)\subseteq S$, first pick
$s_1$ elements $w(1)>w(2)>\dots>w(s_1)$ from $[n]$, then $s_2-s_1$ elements
$w(s_1+1)>w(s_1+2)>\dots>w(s_2)$ from the remaining elements, and so on.
Equivalently, $\alphas$ counts ballots on $[n]$ whose block sizes are
given by the vector
$$
\Delta(S):=(s_1,s_2-s_1,\dots,n-s_r).
$$
We~\cite{CC2} have extended $\alphas$ to Cayley permutations by letting
\begin{align*}
\betasw &=|\lbrace w\in\Cay[n]:\, \Asc(w)\subseteq S\rbrace|;\\
\betass &=|\lbrace w\in\Cay[n]:\, \Asc^{\circ}(w)\subseteq S\rbrace|.
\end{align*}
We~\cite[Theorem 8.2]{CC2} then showed that
\begin{equation}\label{eq_mat(S)}
\betass=|\Mat(S)[n]|
\quand
\betasw=|\BMat(S)[n]|,
\end{equation}
where $\Mat(S)$ denotes the set of Burge matrices whose vector
of row sums is equal to $\Delta(S)$,
and $\BMat(S)$ denotes such matrices that are binary.

The main goal of this section is to establish the following result.

\begin{proposition}\label{formulas_fixascset}
We have
\begin{align}
|\Genmat_{m}[n]| &= 
\sum_{k=0}^n\sum_{i=0}^k(-1)^i\binom{k}{i}\multiset{m(k-i)}{n};\label{eq_sign_rev_Burge}\\
|\BGenmat_{m}[n]| &=
\sum_{k=0}^n\sum_{i=0}^k(-1)^i\binom{k}{i}\binom{m(k-i)}{n}.
\label{eq_sign_rev_BinBurge}\\
\intertext{Furthermore, for $S=\{s_1,\dots,s_r\}\subseteq [n-1]$ with $s_1<s_2<\dots<s_r$,}
\betass &=
\sum_{k=0}^n\sum_{i=0}^k
(-1)^i\binom{k}{i}\prod_{j=0}^r\multiset{k-i}{s_{j+1}-s_{j}};\label{eq_sign_rev_betass}\\
\betasw &=
\sum_{k=0}^n\sum_{i=0}^k
(-1)^i\binom{k}{i}\prod_{j=0}^r\binom{k-i}{s_{j+1}-s_{j}},\label{eq_sign_rev_betasw}
\end{align}
where $s_0=0$ and $s_{r+1}=n$.
\end{proposition}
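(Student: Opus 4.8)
The plan is to prove all four identities by a single inclusion--exclusion argument, realized as a sign-reversing involution, applied to four closely related families of integer matrices. Recall from the matrix description underlying Equation~\eqref{eq_genmat} that a $\Genmat_{m}$-structure on $[n]$ may be encoded as an $m$-row matrix of nonnegative integers, of total sum $n$, in which every column has a positive entry; here the number of columns $k$ ranges over $0\le k\le n$, and $\BGenmat_{m}[n]$ consists of such matrices with entries in $\{0,1\}$. For the last two formulas I would first invoke Equation~\eqref{eq_mat(S)}, which identifies $\betass=|\Mat(S)[n]|$ and $\betasw=|\BMat(S)[n]|$, where $\Mat(S)$ (resp.\ $\BMat(S)$) consists of Burge matrices (resp.\ binary Burge matrices) whose vector of row sums is $\Delta(S)=(s_1,s_2-s_1,\dots,n-s_r)$. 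Since every part of $\Delta(S)$ is positive, these matrices have exactly $r+1$ rows, indexed by $j=0,\dots,r$ with the $j$th row summing to $s_{j+1}-s_j$ (with $s_0=0$ and $s_{r+1}=n$), and again every column must be nonzero. Thus in all four cases we are counting matrices with a fixed set of rows, a variable number of columns $k$, prescribed or free row sums, nonnegative or binary entries, and the sole global constraint that no column is identically zero.

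Next I would set up the involution. Fix $k$ and consider signed objects $(I,M')$, where $I\subseteq[k]$ is a set of ``forcibly empty'' columns carrying sign $(-1)^{|I|}$, and $M'$ is an admissible filling of the remaining $k-|I|$ columns subject to whichever entry and row-sum constraints the case demands. Summing the sign over all such pairs for fixed $k$ produces $\sum_{i=0}^{k}(-1)^i\binom{k}{i}N_{k-i}$, where $N_{k-i}$ is the number of \emph{unconstrained} fillings of $k-i$ columns, that is, fillings in which columns are allowed to vanish. The involution locates the least column $c\in[k]$ that is empty---either because $c\in I$ or because $c$ is an all-zero column of $M'$---and toggles its status: if $c\in I$ it is removed from $I$ and reinserted into $M'$ as an all-zero column, and otherwise it is deleted from $M'$ and placed in $I$. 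This changes $|I|$ by $\pm1$, hence reverses the sign; it preserves all row sums, since only an all-zero column is added or removed; and it is an involution because the emptiness pattern of the columns, and thus the choice of $c$, is unchanged. The fixed points are exactly the pairs with $I=\emptyset$ and $M'$ having no empty column, that is, the genuine matrices of the family with exactly $k$ columns.

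It remains to evaluate the unconstrained counts $N_{k-i}$, which factor over rows. For $\Genmat_{m}$ an unconstrained filling of $k-i$ columns is just a nonnegative $m\times(k-i)$ matrix of total $n$, of which there are $\multiset{m(k-i)}{n}$; summing the fixed-point count over $0\le k\le n$ yields Equation~\eqref{eq_sign_rev_Burge}, and the binary analogue replaces $\multiset{m(k-i)}{n}$ by $\binom{m(k-i)}{n}$, giving Equation~\eqref{eq_sign_rev_BinBurge}. For $\Mat(S)$ the rows are independent and the $j$th row distributes the fixed sum $s_{j+1}-s_j$ over $k-i$ cells, contributing $\multiset{k-i}{s_{j+1}-s_j}$ choices, so $N_{k-i}=\prod_{j=0}^{r}\multiset{k-i}{s_{j+1}-s_j}$ and summation over $k$ gives Equation~\eqref{eq_sign_rev_betass}; the binary case replaces each multichoose by the corresponding binomial, giving Equation~\eqref{eq_sign_rev_betasw}.

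The truncation of the $k$-sum at $n$ is harmless: any matrix of the relevant family has total at least its number of nonzero columns, so a structure of total $n$ has at most $n$ columns, and every fixed point produced above arises for some $k\le n$. I expect the main point requiring care to be purely bookkeeping: checking that the involution is well defined and genuinely sign-reversing across all four cases simultaneously---in particular that adding or deleting an all-zero column respects the row-sum and entry constraints---and correctly aligning the row indexing with the parts of $\Delta(S)$ via the conventions $s_0=0$ and $s_{r+1}=n$. Once the single involution is stated cleanly, the four formulas follow as immediate specializations.
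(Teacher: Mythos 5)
Your proof is correct and follows essentially the same route as the paper: both arguments are inclusion--exclusion realized as a sign-reversing involution that toggles the status of the leftmost empty column (your set $I$ of forcibly empty columns playing the role of the paper's negatively signed columns), with fixed points the genuine matrices with no empty columns, and both reduce the ascent-set formulas to Equation~\eqref{eq_mat(S)} together with the observation that the prescribed positive row sums $\Delta(S)$ force every row to be nonzero. The only difference is presentational: the paper runs the involution on matrices whose entries are linear orders (its species $\G_m$), whereas you use the equivalent nonnegative-integer encoding, a distinction the paper itself calls superficial.
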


Munarini, Poneti and Rinaldi~\cite{MPR} used the
inclusion–exclusion principle to obtain identities~\eqref{eq_sign_rev_Burge}
and~\eqref{eq_sign_rev_betass} in the context of composition
matrices (see equation~(12) and equation~(13) in their paper,
respectively). Equation~\eqref{eq_sign_rev_betass} also appears
in a book by Andrews~\cite[equation~(4.3.3)]{And} as a counting
formula for so-called \emph{vector compositions}, structures
previously studied by MacMahon~\cite{MacM} under the name of
\emph{compositions of multipartite numbers}.
Here, we provide sign-reversing involution proofs of these equations
that allow us to generalize them to binary matrices $\BGenmat_{m}$
and to $\betasw$. Let us start with the first two equations,
\eqref{eq_sign_rev_Burge} and \eqref{eq_sign_rev_BinBurge}.

For $m\geq 0$, define $\G_m$ as the species obtained from $\Genmat_{m}$
by allowing empty columns, but no more than $n$ columns in total, and
then giving each column a sign such that nonempty columns are positive,
but empty columns can be negative or positive. More precisely, for a
totally ordered set $\ell=\{u_1,\dots,u_n\}$ with $u_1<\dots<u_n$, we
define a $\G_{m}$-structure on~$\ell$ as a pair $(A,\sgn)$, where
$A=\bigl[\,\mathbf{a}_1\;\mathbf{a}_2\;\cdots\;\mathbf{a}_{k}\,\bigr]$
is a matrix with $m$ rows and $k\leq n$ columns, $\sgn:[k]\to\{-1,1\}$,
and the following two conditions are satisfied:
\begin{enumerate}
\item $\prod A = u_1u_2\ldots u_n$;
\item $\prod \mathbf{a}_i\neq\epsilon\implies \sgn(i)=1$.
\end{enumerate}
For simplicity, we will refer to the pair $(A,\sgn)$ simply as a
(signed) matrix and use $A$ as its identifier.  By way of illustration,
we may write $A\in\G_{4}[9]$ where the matrix $A$ together with the
values of $\sgn$ (on top of each column) are displayed below:
\[
  A=
  \begin{bNiceMatrix}[first-row,margin]
    -     & +     & +     & -     & -     & +     & + \\
    \cdot & \cdot & \cdot & \cdot & \cdot & 5     & 67 \\
    \cdot & 123   & \cdot & \cdot & \cdot & \cdot & \cdot \\
    \cdot & \cdot & \cdot & \cdot & \cdot & \cdot & \cdot \\
    \cdot & 4     & \cdot & \cdot & \cdot & \cdot & 89
  \end{bNiceMatrix}
\]
As an additional illustration, the five matrices in $\G_1[2]$ are displayed below:
\[
\begin{bNiceMatrix}[first-row,margin]
  + & + \\
  1 & 2
\end{bNiceMatrix}\qquad
\begin{bNiceMatrix}[first-row,margin]
  +     & +  \\
  \cdot & 12
\end{bNiceMatrix}\qquad
\begin{bNiceMatrix}[first-row,margin]
  -     & +  \\
  \cdot & 12
\end{bNiceMatrix}\qquad
\begin{bNiceMatrix}[first-row,margin]
  +  & +  \\
  12 & \cdot
\end{bNiceMatrix}\qquad
\begin{bNiceMatrix}[first-row,margin]
  +  & -  \\
  12 & \cdot
\end{bNiceMatrix}
\]
To construct a matrix $A$ in $\G_m[n]$ with $k$ columns, first choose
$i\leq k$ negative columns in $\binom{k}{i}$ ways. Next, fill in the
remaining $mk-mi=m(k-i)$ entries of the matrix with linear orders so
that $\prod A = 12\ldots n$. It is enough to pick the sizes of the
linear orders and they form a weak composition of $n$ into $m(k-i)$
parts; that is, an integer composition with nonnegative parts. There are
$\multiset{m(k-i)}{n}$ such compositions and we have
arrived at the formula
$$
|\G_m[n]|=\sum_{k=0}^n\sum_{i=0}^k\binom{k}{i}\multiset{m(k-i)}{n}.
$$

By abuse of notation, we define the sign of
$A=\bigl[\,\mathbf{a}_1\;\mathbf{a}_2\;\cdots\;\mathbf{a}_{k}\,\bigr]\in\G_m[n]$
by
$$
\sgn(A)=\sgn(1)\sgn(2)\cdots\sgn(k)
$$
In other words, $\sgn(A)=(-1)^i$ where $i$ is the number of negative
columns. Now, given $A\in \G_m[n]$, let $\lambda(A)\in \{0,1,\ldots,k\}$
be the index of the leftmost empty column of $A$; if there is no such
column, let $\lambda(A)=0$. Note that $A\in\Genmat_{m}[n]$ if and
only if $\lambda(A)=0$. Finally, define the map
$\gamma:\G_m[n]\to \G_m[n]$ by letting $\gamma(A)$ be the matrix
obtained from $A$ by reversing the sign of column $\lambda(A)$ if
$\lambda(A)\neq 0$, and letting $\gamma(A)=A$ otherwise. It is clear
that~$\gamma$ is an involution on $\G_m[n]$:
$$
\gamma\bigl(\gamma(A)\bigr)=A.
$$
Furthermore, let $\fix(\gamma)=\lbrace A:\gamma(A)=A\rbrace$ be the
set of \emph{fixed points} of $\gamma$. Then
$$
\mathrm{Fix}(\gamma)=\{A: \lambda(A)=0\}=\Genmat_{m}[n].
$$
On the other hand, if $A$ is not fixed, then
$\sgn(\gamma(A))=-\sgn(A)$. Consequently, $\gamma$ is a sign-reversing
involution on $\G_m[n]$ whose set of fixed points is
$\Genmat_{m}[n]$. Since fixed points have positive sign,
Equation~\eqref{eq_sign_rev_Burge} follows:
\begin{align*}
|\Genmat_{m}[n]|
&=\sum_{A\in\fix(\gamma)}\sgn(A)\\
&=\sum_{A\in \G_m[n]}\sgn(A)
=\sum_{k=0}^n\sum_{i=0}^k(-1)^i\binom{k}{i}\multiset{m(k-i)}{n}.
\end{align*}

The same technique applies to binary matrices in $\BGenmat_{m}[n]$.
Here, the nonempty linear orders have size one and hence the weak
composition in the above argument has parts of size at most one.
Clearly, there are $\binom{m(k-i)}{n}$ such compositions of $n$ into
$m(k-i)$ parts. Following what has by now become a familiar pattern in
this article, the formula for $\BGenmat_m$ is simply obtained by
replacing the multichoose coefficient with a binomial coefficient in
Equation~\eqref{eq_sign_rev_Burge}:
$$
|\BGenmat_{m}[n]|=
\sum_{k=0}^n\sum_{i=0}^k(-1)^i\binom{k}{i}\binom{m(k-i)}{n}.
$$

A simple adjustment of the same approach allows us to prove the
equations for $\betass$ and $\betasw$. By Equation~\eqref{eq_mat(S)}, it
suffices to count (binary) Burge matrices whose vector of row sums is
equal to $\Delta(S)$. Given a matrix $A=(a_{ij})$ in $\G_m[n]$, let
$\hat{A}=(|a_{ij}|)$ denote the matrix obtained from $A$ by replacing
each linear order with its size. Also, let $\ones$ denote the all ones
vector. Then $\hproj{A}$ is the row sum vector of $\hat{A}$: its $i$th
component equals the total size of the linear orders on row $i$.
Letting
\begin{gather*}
\G_m(S)[n]=\lbrace A\in \G_m[n]:\hproj{A}=\Delta(S)\rbrace
\shortintertext{we have}
|\G_m(S)[n]|=
\sum_{k=0}^n\sum_{i=0}^k\binom{k}{i}
\multiset{k-i}{s_1}\multiset{k-i}{s_2-s_1}\dots\multiset{k-i}{n-s_r}.
\end{gather*}
Indeed, the same argument used to count $\G_m[n]$ holds, except that
here we have to pick the right sizes of the nonempty linear orders in
each row;
that is, we pick a weak composition of $s_1$ among the $k-i$ nonempty
columns for the first row, a weak composition of $s_2-s_1$ for the second
row, and so on.
Now, the sign $\sgn$ and the sign-reversing involution~$\gamma$
defined previously on $\G_m[n]$ work analogously on $\G_m(S)[n]$,
and the fixed points are the matrices in $\G_m(S)[n]$
where every column contains at least one nonempty linear order.
Further, due to the equality $\hproj{A}=S$, each row contains at
least one nonempty linear order.
To summarize, the fixed points are the Burge matrices
whose row sums are given by $\Delta(S)$, and identity~\eqref{eq_sign_rev_betass}
now immediately follows from identity~\eqref{eq_mat(S)}:
\begin{align*}
\betass &=|\Mat(S)[n]|\\
&=\sum_{k=0}^n\sum_{i=0}^k
(-1)^i\binom{k}{i}\multiset{k-i}{s_1}\multiset{k-i}{s_2-s_1}\dots\multiset{k-i}{n-s_r}.
\intertext{Finally, the same reasoning applies to binary matrices, yielding}
\betasw &=|\BMat(S)[n]|\\
&=\sum_{k=0}^n\sum_{i=0}^k
(-1)^i\binom{k}{i}\binom{k-i}{s_1}\binom{k-i}{s_2-s_1}\dots\binom{k-i}{n-s_r}.
\end{align*}
This completes the proof of Proposition~\ref{formulas_fixascset}.

We end this section with a remark about Cayley permutations whose ascent
set \emph{equals} the set $S=\{s_1,s_2,\dots,s_k\}$. Using the principle of
inclusion-exclusion, the number
$\lambda_n(S) := |\lbrace w\in\Cay[n]: \Asc(w) = S\rbrace|$
can be expressed in terms of the formula for $\betasw$, but the result
would be a rather unpleasing triple sum.  There is a nice determinant
formula for the number
$\beta_n(S)=|\lbrace v\in\Sym[n]:\Asc(v)= S\rbrace|$ of permutations of
$[n]$ whose ascent set is equal to $S$, namely
$\beta_n(S)=n!\det\bigl[1/(s_{j}-s_{i-1})\bigr]$.  See Stanley~\cite[p.\
229]{St} for further details. We have not been able to find a
corresponding formula for $\lambda_n(S)$.

\section{Binary Burge matrices}\label{section_SRI_2}

In Section~\ref{section_burmat}, we proved the species identities
\begin{align}
  \Sym\times\Genmat_m &= \bigl(\Bal\times \Set(mX)\bigr)\circ\Lin^c;\label{eqA} \\
  \Sym\times\Mat &= \left(\Bal\times\Bal\right)\circ\Lin^c. \label{eqB}
\end{align}
Here, we aim to define two sign-reversing involutions on the structures
of these species whose fixed points are
$(\Sym\times\BGenmat_m)$-structures and $(\Sym\times\BMat)$-structures,
respectively. This way we shall obtain new counting formulas for
$\BGenmat_{m}$ and $\BMat$.

Consider Equation~\eqref{eqA} and let $(w,A)$ be an
$(\Sym\times\Genmat_m)$-structure on $\ell$. By Lemma~\ref{lemma_LxBur},
we can identify $(w,A)$ with the matrix $M=w\cdot A$ obtained
by letting $w$ act on $A$. Following Example~\ref{example_wA},
\[\text{if }w=784652391\text{ and }
A=\begin{bmatrix}
\cdot & 5 & 67\\
123 & \cdot & \cdot\\
\cdot & \cdot & \cdot\\
4 & \cdot & 89
\end{bmatrix}\!,
\,\text{then }
M=\begin{bmatrix}
\cdot & 5 & 23\\
784 & \cdot & \cdot\\
\cdot & \cdot & \cdot\\
6 & \cdot & 91
\end{bmatrix}.
\]
Alternatively, in terms of the species on the right-hand side of
Equation~\eqref{eqA}, $M$ is obtained by arranging atoms (i.e., connected
linear orders) in a matrix where the column index of each atom is determined
by the first component of the cartesian product $\Bal\times \Set(mX)$, and
the row index is determined by the second component.
In the matrix $M$ of the previous example, the atoms are
$\lbrace 78,4,6,5,23,9,1 \rbrace$. The same reasoning
applies to the species of Equation~\eqref{eqB}.
In a pair $(w,A)\in\Sym\times\Mat$ the rows of $A$ are, however, required to
be nonempty. Hence the row index is determined by a $\Bal$-structure,
and we arrive at $\left(\Bal\times\Bal\right)\circ\Lin^c$.

Now, let
$T=\Sym\times\Genmat_m=\LCirc=\bigl(\Bal\times \Set(mX)\bigr)\circ\Lin^c$
be the species of Equation~\eqref{eqA}. We shall identify a $T$-structure
on $\ell$ with a matrix in $\LCircStr[\ell]$, but we will keep track of
its atoms by separating the atoms in each entry with vertical bars.
The usual example matrix $M$ is encoded as
$$
\phantom{\tau()}M=\begin{bmatrix}
\cdot & 5 & 23\\
78|4 & \cdot & \cdot\\
\cdot & \cdot & \cdot\\
6 & \cdot & 9|1
\end{bmatrix}.
$$
Define the sign of $M\in T[n]$ as $\sgn(M)=(-1)^{n-k}$, where $k$
is the number of atoms of $M$. Furthermore, define a mapping
$\tau:T[n]\to T[n]$ by letting $\tau(M)$ be the matrix obtained from $M$
in the following manner. Read the entries of $M$ in some canonical
order; e.g., read them in the same order as they appear in the
concatenation $\prod M$. Relative to this order, find the first entry
$m_{ij}$ of $M$ such that $|m_{ij}|\geq 2$ and transpose the
first two letters of $m_{ij}$. If there is no such entry
$m_{ij}$, then let $\tau(M)=M$. The matrix $M$ illustrated previously
has seven atoms and sign $\sgn(M)=(-1)^{9-7}=1$. Further,
$$
\tau(M)=\begin{bmatrix}
\cdot & 5 & 23\\
8|7|4 & \cdot & \cdot\\
\cdot & \cdot & \cdot\\
6 & \cdot & 9|1
\end{bmatrix}
$$
is obtained from $M$ by the transposition $78\mapsto 8|7$. Since
$\tau(M)$ has one more atom than $M$, the sign is reversed:
$\sgn(\tau(M))=-\sgn(M)$.

It is clear that $\tau$ is an involution on $T[n]$.
In particular, $M$ is a fixed point of $\tau$ if and only if each entry
of $M$ is a linear order of size at most one. If $(w,A)$ is the pair in
$\left(\Sym\times\Genmat_m\right)[n]$ corresponding to $M$, this is the same as
saying that $A$ is binary. In other words, the set of fixed points of
$\tau$ corresponds to the pairs in $\left(\Sym\times\BGenmat_m\right)[n]$.
On the other hand, if $M$ is not a fixed point, then $\tau(M)$ has the opposite
sign of $M$. Indeed, suppose that $\tau$ acts on $M$ by transposing
the two letters $yz$. If $y<z$, then $y$ and $z$ belong to
the same atom in $M$, but to different atoms in $\tau(M)$, and thus
$\tau(M)$ has one more atom than $M$. Conversely, if $y>z$, then $\tau(M)$
has one less atom than $M$. Therefore, $\tau$ is a sign-reversing
involution on $T$ and
\begin{align*}
|\BGenmat_{m}[n]|
&=\frac{1}{n!}\sum_{M\in\fix(\tau)}\sgn(M)\\
&=\frac{1}{n!}\sum_{M\in T[n]}\sgn(M)
=\frac{1}{n!}\sum_{k=0}^n(-1)^{n-k}\stone{n}{k}\fubini(k)m^k.
\end{align*}
The same reasoning applies to matrices in $T$ whose
every row contains at least one nonempty linear order, that is,
to the species $\left(\Bal\times\Bal\right)\circ\Lin^c$.
The signed sum of such matrices of size~$n$ is
$\sum_{k=0}^n(-1)^{n-k}\stone{n}{k}\fubini(k)^2$
and an analogous equation for $|\BMat[n]|$ follows.
We collect the formulas derived in this section in the next proposition.

\begin{proposition}\label{prop_sri_binary}
For each $n,m\ge 0$,
\begin{align*}
|\BGenmat_{m}[n]|&=\frac{1}{n!}\sum_{k=0}^n (-1)^{n-k}\stone{n}{k}\fubini(k)m^k;\\
|\BMat[n]|&=\frac{1}{n!}\sum_{k=0}^n(-1)^{n-k}\stone{n}{k}\fubini(k)^2.
\end{align*}
\end{proposition}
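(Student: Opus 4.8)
The plan is to obtain both formulas from the sign-reversing involution $\tau$ built earlier in this section, so the proof reduces to assembling three observations and evaluating one signed sum. Write $T = \Sym\times\Genmat_m = \LCircStr$ and identify a $T$-structure with a matrix $M$ whose entries, split at their left-to-right minima, decompose into $k$ atoms, with sign $\sgn(M) = (-1)^{n-k}$. First I would verify that $\tau$ is a sign-reversing involution on $T[n]$: transposing the first two letters of the earliest entry of size at least two either merges two adjacent atoms into one or splits one atom into two, so it changes the atom count by exactly one and flips the sign, while preserving the underlying matrix shape and the total size $n$. Being an involution is immediate, since $\tau$ acts on the same entry when applied twice.

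Next I would pin down the fixed points. A matrix is fixed precisely when no entry has size at least two, i.e. every entry is a linear order of size at most one; under the bijection of Lemma~\ref{lemma_LxBur} these are exactly the pairs $(w,A)$ with $A$ binary, so $\fix(\tau) = (\Sym\times\BGenmat_m)[n]$. The key numerical point is that for such a fixed point every nonempty entry is a single atom, whence the number of atoms equals the number of letters, namely $n$, and the sign is $(-1)^{n-n} = +1$. Therefore $\sum_{M\in\fix(\tau)}\sgn(M) = |\fix(\tau)| = n!\,|\BGenmat_m[n]|$.

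Finally I would evaluate the total signed sum, which the involution forces to equal the fixed-point sum. Since all $T$-structures with a common atom count $k$ carry the same sign $(-1)^{n-k}$, I would group by $k$ and apply Lemma~\ref{lemma_species_comp} to $T = (\Bal\times\Set(mX))\circ\Lin^c$, using $|E_k(\Lin^c)[n]| = \stone{n}{k}$ (because $\Lin^c$ and $\Cyc$ share the generating function $\log(1/(1-x))$) together with $|(\Bal\times\Set(mX))[k]| = \fubini(k)m^k$. This gives $\sum_{M\in T[n]}\sgn(M) = \sum_{k=0}^n(-1)^{n-k}\stone{n}{k}\fubini(k)m^k$, and equating with the fixed-point sum and dividing by $n!$ yields the first formula. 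For the second formula I would rerun the argument verbatim on $T' = \Sym\times\Mat = (\Bal\times\Bal)\circ\Lin^c$, where $\tau$ is restricted to matrices with no empty rows; the fixed points are now the binary Burge matrices and the only change is $|(\Bal\times\Bal)[k]| = \fubini(k)^2$, producing $|\BMat[n]| = \frac{1}{n!}\sum_{k=0}^n(-1)^{n-k}\stone{n}{k}\fubini(k)^2$. I expect the only delicate point to be bookkeeping: checking that $\tau$ alters the atom count by exactly one in both the merging and splitting directions, and that in the $\BMat$ case the transposition never disturbs the nonempty-row ($\Bal$) constraint, so that $\tau$ genuinely maps $T'[n]$ to itself.
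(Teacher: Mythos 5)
Your proposal is correct and takes essentially the same route as the paper: the same sign $(-1)^{n-k}$, the same involution $\tau$ transposing the first two letters of the first entry of size at least two, the same identification of the fixed points with $(\Sym\times\BGenmat_m)[n]$ (respectively $(\Sym\times\BMat)[n]$), and the same evaluation of the signed sum by grouping structures of $(\Bal\times\Set(mX))\circ\Lin^c$ (respectively $(\Bal\times\Bal)\circ\Lin^c$) according to their number of atoms. The only difference is presentational: you justify the signed-sum evaluation explicitly via Lemma~\ref{lemma_species_comp} with $|E_k(\Lin^c)[n]|=\stone{n}{k}$, a step the paper leaves implicit.
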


Let $\Lin^c_{(-1)}$ denote the species of \emph{signed connected linear
orders}, i.e., connected linear orders $w\in \Lin^c[n]$ with sign
$(-1)^{n-1}$. In particular,
$$
\Lin^c_{(-1)}(x)=\sum_{n\ge 0}(-1)^{n-1} (n-1)!\frac{x^n}{n!}
=\log\left(1+x\right).
$$
The sign-reversing involutions that lead to
Proposition~\ref{prop_sri_binary} are embodied by the equations in the
following proposition (whose straightforward proof we omit).
\begin{proposition}\label{species_sri}
  For $m\geq 0$,
  \begin{align*}
    \Sym\times\Genmat_m^{01} &= \bigl(\Bal\times \Set(mX)\bigr)\circ\Lin^c_{(-1)}; \\
    \Sym\times\BMat &= \left(\Bal\times\Bal\right)\circ\Lin^c_{(-1)}.
  \end{align*}
\end{proposition}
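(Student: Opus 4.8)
The plan is to read each line of the proposition as an identity of \emph{signed} $\LL$-species and to recall that two signed $\LL$-species coincide exactly when their signed exponential generating functions agree (the signed refinement of the fact $F=G\Leftrightarrow F(x)=G(x)$ used throughout for $\LL$-species). Since the left-hand sides carry no signs, their signed EGF is the ordinary one, namely $\sum_{n\ge 0}|\Genmat_m^{01}[n]|x^n$ and $\sum_{n\ge 0}|\BMat[n]|x^n$ respectively, using $(\Sym\times F)(x)=\sum_n|F[n]|x^n$. So it suffices to show that the right-hand sides have these as their signed EGFs, and this is precisely what the involution $\tau$ of this section already delivers.

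First I would make the sign of a right-hand structure explicit. An $\bigl((\Bal\times\Set(mX))\circ\Lin^c_{(-1)}\bigr)$-structure on $[n]$ consists of a set partition $\beta$, a $(\Bal\times\Set(mX))$-structure on the blocks of $\beta$, and on each block $B$ a signed connected linear order, which by definition carries the sign $(-1)^{|B|-1}$. As the outer factor $\Bal\times\Set(mX)$ is unsigned, the total sign is the product of the block signs, which telescopes:
\[
\prod_{B\in\beta}(-1)^{|B|-1}=(-1)^{\sum_{B\in\beta}(|B|-1)}=(-1)^{\,n-k},\qquad k=|\beta|.
\]
This is exactly the sign $\sgn(M)=(-1)^{n-k}$ attached to a matrix $M\in T[n]$ before Proposition~\ref{prop_sri_binary}, with $k$ the number of atoms (one atom per block). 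Thus, forgetting signs, the structures of $(\Bal\times\Set(mX))\circ\Lin^c_{(-1)}$ are literally the $T$-structures of Equation~\eqref{eqA}, now weighted by the matching sign.

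Next I would invoke $\tau$. It is a sign-reversing involution on $T[n]$ whose fixed points are the matrices all of whose entries are linear orders of length at most one---equivalently the binary matrices, i.e.\ the $(\Sym\times\Genmat_m^{01})$-structures---each of sign $(-1)^{n-n}=+1$, while every non-fixed structure is paired with one of opposite sign. Hence the signed EGF of $(\Bal\times\Set(mX))\circ\Lin^c_{(-1)}$ equals the EGF of $\Sym\times\Genmat_m^{01}$, which is the first identity. The second identity follows verbatim after replacing the colouring species $\Set(mX)$ on the row indices by the ballot species $\Bal$---exactly the passage from Equation~\eqref{eqA} to Equation~\eqref{eqB}---so that the fixed points become the binary Burge matrices, i.e.\ the $(\Sym\times\BMat)$-structures.

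As a quick independent check, confirming the author's ``straightforward'' claim, I would verify the two EGF identities directly from $\Lin^c_{(-1)}(x)=\log(1+x)$ and $F(mX)=F\times\Set(mX)$: the first gives $\Bal\bigl(m\log(1+x)\bigr)=1/\bigl(2-(1+x)^m\bigr)$, matching $\sum_n|\Genmat_m^{01}[n]|x^n$ from Proposition~\ref{prop:ogf_genmat}; the second gives $(\Bal\times\Bal)\bigl(\log(1+x)\bigr)$, and expanding $\bigl(\log(1+x)\bigr)^k/k!=\sum_n(-1)^{n-k}\stone{n}{k}x^n/n!$ reproduces the formula $|\BMat[n]|=\tfrac1{n!}\sum_k(-1)^{n-k}\stone{n}{k}\fubini(k)^2$ of Proposition~\ref{prop_sri_binary}. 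I expect the only real obstacle to be conceptual rather than computational: one must fix once and for all that ``$=$'' here is an equality in the ring of signed $\LL$-species (equivalently of signed EGFs), recording the cancellation of opposite-sign structures under $\tau$ rather than a bijection of structures; once this convention is in place, the telescoping sign computation together with the already-constructed involution $\tau$ supplies all the content.
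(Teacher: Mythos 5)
Your proposal is correct and takes essentially the same route the paper intends: the paper omits the proof as ``straightforward,'' pointing to the sign-reversing involutions of Section~\ref{section_SRI_2}, and your argument---telescoping the block signs of $\Lin^c_{(-1)}$ to $(-1)^{n-k}$, identifying the signed right-hand structures with the $T$-structures of equations~\eqref{eqA} and~\eqref{eqB}, and letting $\tau$ cancel all non-fixed structures so that only the binary matrices (of sign $+1$) survive---is precisely that omitted argument, with your generating-function check via $\Lin^c_{(-1)}(x)=\log(1+x)$ serving as independent confirmation. Your closing remark that ``$=$'' must be read as equality of signed $\LL$-species (equivalently of signed EGFs, since signs cancel and no sign-preserving bijection can exist) makes explicit a convention the paper leaves implicit, which is a useful clarification rather than a deviation.
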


We end this section by noting that the sign-reversing
involution~$\tau$ can be applied to the species $\Sym\times\Mat^{s,t}$
and that the result is a formula for the strict two-sided Caylerian polynomials
that is analogous to Theorem~\ref{thm_twocay} for the
weak counterpart:
\begin{equation}\label{eq_twocaystrict}
\twocayhat^{\circ}_n(s,t)=
\frac{1}{n!}\sum_{k=0}^n(-1)^{n-k}\stone{n}{k}\bigl|\Bal^s[k]\bigr|\bigl|\Bal^t[k]\bigr|.
\end{equation}
A formula for the strict Caylerian polynomials can be obtained
in a similar fashion, this time in analogy with
Theorem~\ref{thm_caylerian_explicit}:
\begin{equation}\label{eq_caylerianstr_explicit}
C^{\circ}_n(t)=
\frac{1}{n!}\sum_{k=0}^n (-1)^{n-k}\stone{n}{k}\fubini(k) \sum_{i=0}^k\sttwo{k}{i}i!(t-1)^{n-i}.
\end{equation}

\section{Final remarks}\label{sec_final}

In our recent paper~\cite{CC2}, we developed a
framework relating the Caylerian polynomials and their variants
with Burge matrices and Burge words.
Here, using combinatorial species and sign-reversing involutions,
we built on the interplay between these structures
to obtain several counting formulas and species equations, most of which are collected
in tables~\ref{table_burge}, \ref{table_caylerian} and~\ref{table_species}.
For a list of open problems and suggestions for future work,
we refer to the same paper~\cite{CC2}. We end with a couple of further
remarks and questions.

The OEIS~\cite{Sl} entries for the counting sequences of Burge
matrices (A120733) and binary Burge matrices (A101370) contain
the generating functions
\begin{align*}
\sum_{n\ge 0}|\Mat[n]|x^n &= \sum_{m\ge 0}\frac{1}{2^{m+1}}(\Sym\times\Genmat_m)(x);\\
\sum_{n\ge 0}|\BMat[n]|x^n &= \sum_{m\ge 0}\frac{1}{2^{m+1}}(\Sym\times\BGenmat_m)(x).
\end{align*}
Can these two identities be proved with the tools developed here?

Maia and Mendez~\cite[equation~(92)]{MM} defined the \emph{modified
arithmetic product} of two species $F$ and $G$ as
$$
\left(F\ModProd G\right)[U] = \sum_{(\pi,\tau)} F[\pi]\times G[\tau],
$$
where the sum ranges over all the partial rectangles $(\pi,\tau)$
over $U$. Further, they~\cite[equation~(116)]{MM} showed that
$$
\left(F\ModProd G\right)(x)=
\bigl(F(\Set_+)\times G(\Set_+)\bigr)(x)\circ\log(1+x).
$$
The notion of arithmetic product allows us to write
$\Sym\times\BMat=\Lin\ModProd \Lin$,
and thus
\[
\left(\Sym\times\BMat\right)(x)
=\left(\Lin\ModProd \Lin\right)(x)
=\left(\Bal\times \Bal\right)(x)\circ\log(1+x),
\]
which gives an alternative path to the second equation of Proposition~\ref{prop_sri_binary}.
Using another identity of Maia and Mendez~\cite[Equation (143)]{MM},
we also get
$$
|\BMat[n]| = \sum_{r,s\ge 0}\frac{1}{2^{r+s+2}}\binom{rs}{n}.
$$
We could not find a proof of the corresponding equation for $\Mat[n]$
(A120733~\cite{Sl}):
$$
|\Mat[n]| = \sum_{r,s\ge 0}\frac{1}{2^{r+s+2}}\multiset{rs}{n}.
$$

Munarini, Poneti and Rinaldi~\cite{MPR} considered \emph{matrix
compositions without zero rows}. In our setting, such matrix
compositions of~$n$ correspond to Burge matrices in $\Mat[n]$.
The interested reader is invited to consult Section~7 of their
paper in which they present more enumerative results concerning these matrices.

\begin{table}
\begin{center}
\renewcommand{\arraystretch}{3.5}
\begin{tabular}{ll}
\textbf{Burge matrices} & \textbf{Reference}\\
\midrule
$\displaystyle{|\Genmat_{m}[n]|=\sum_{\alpha\models n}\prod_{a\in\alpha}\multiset{m}{a}}$
& Equation~\eqref{eq_genmat}\\
$\displaystyle{|\Genmat_{m}[n]|=\frac{1}{n!}\sum_{k=0}^n\stone{n}{k}m^k \fubini(k)}$
& Proposition~\ref{prop_bal_mLc}\\
$\displaystyle{|\Genmat_{m}[n]|=\sum_{k,i}(-1)^i\binom{k}{i}\multiset{m(k-i)}{n}}$
& Equation~\eqref{eq_sign_rev_Burge}\\
$\displaystyle{|\BGenmat_{m}[n]|=\sum_{\alpha\models n}\prod_{a\in\alpha}\binom{m}{a}}$
& Proposition~\ref{prop_enum_bgenm}\\
$\displaystyle{|\BGenmat_{m}[n]|=\frac{1}{n!}\sum_{k=0}^n (-1)^{n-k}\stone{n}{k}\fubini(k)m^k}\qquad$
& Proposition~\ref{prop_sri_binary}\\
$\displaystyle{|\BGenmat_{m}[n]|=
\sum_{k,i}(-1)^i\binom{k}{i}\binom{m(k-i)}{n}}$
& Equation~\eqref{eq_sign_rev_BinBurge}\\
$\displaystyle{|\Mat[n]|=\frac{1}{n!}\sum_{k=0}^n\fubini(k)^2\stone{n}{k}}$
& Proposition~\ref{prop_spec_burmat}\\
$\displaystyle{|\BMat[n]|=\frac{1}{n!}\sum_{k=0}^n(-1)^{n-k}\stone{n}{k}\fubini(k)^2}$
& Proposition~\ref{prop_sri_binary}\\
$\displaystyle{\sum_{n\ge 0}|\Genmat_{m}[n]|x^n=\frac{(1-x)^m}{2(1-x)^m-1}}$
& Proposition~\ref{prop:ogf_genmat}\\
$\displaystyle{\sum_{n\ge 0}|\BGenmat_{m}[n]|x^n=\frac{1}{2-(1+x)^m}}$
& Proposition~\ref{prop:ogf_genmat}
\end{tabular}
\end{center}
\caption{Formulas for (variants of) Burge matrices}\label{table_burge}
\end{table}

\begin{table}
\begin{center}
\renewcommand{\arraystretch}{3.5}
\begin{tabular}{ll}
\textbf{Caylerian polynomials} & \textbf{Reference}\\
\midrule
$\displaystyle{C_n(t)=
\frac{1}{n!}\sum_{k,i}
\stone{n}{k}\fubini(k)\sttwo{k}{i}i!(t-1)^{n-i}}$
& Theorem~\ref{thm_caylerian_explicit}\\
$\displaystyle{C^{\circ}_n(t)=
\frac{1}{n!}\sum_{k,i}
(-1)^{n-k}\stone{n}{k}\fubini(k)\sttwo{k}{i}i!(t-1)^{n-i}}$
& Equation~\eqref{eq_caylerianstr_explicit}\\
$\displaystyle{\sum_{n\ge 0}\frac{tC_n(t)}{(1-t)^{n+1}}x^n=
\sum_{m\ge 1}\frac{(1-x)^{m}}{2(1-x)^{m}-1}t^m}$
& Theorem~\ref{theorem_genfun_n}\\
$\displaystyle{\sum_{n\ge 0}\frac{tC^{\circ}_n(t)}{(1-t)^{n+1}}x^n=
\sum_{m\ge 1}\frac{1}{2-(1+x)^{m}}t^m}$
& Theorem~\ref{theorem_genfun_n}\\
$\displaystyle{\twocayhat_n(s,t)=
\frac{1}{n!}\sum_{k=0}^n\stone{n}{k}\bigl|\Bal^s[k]\bigr|\bigl|\Bal^t[k]\bigr|}$
& Theorem~\ref{thm_twocay}\\
$\displaystyle{\twocayhat^{\circ}_n(s,t)=
\frac{1}{n!}\sum_{k=0}^n(-1)^{n-k}\stone{n}{k}\bigl|\Bal^s[k]\bigr|\bigl|\Bal^t[k]\bigr|}$
& Equation~\eqref{eq_twocaystrict}\\
$\displaystyle{\betasw=
\sum_{k,i}(-1)^i\binom{k}{i}\prod_{j=0}^r\binom{k-i}{s_{j+1}-s_{j}}}$
& Equation~\eqref{eq_sign_rev_betasw}\\
$\displaystyle{\betass=
\sum_{k,i}(-1)^i\binom{k}{i}\prod_{j=0}^r\multiset{k-i}{s_{j+1}-s_{j}}}$
& Equation~\eqref{eq_sign_rev_betass}
\end{tabular}
\end{center}
\caption{Formulas for Caylerian polynomials}\label{table_caylerian}
\end{table}

\begin{table}
\begin{center}
\renewcommand{\arraystretch}{2}
\begin{tabular}{ll}
\textbf{Species equation} & \textbf{Reference}\\
\midrule
$\Sym\times\Genmat_m = \Lin\circ(L^m-1)$
& Lemma~\ref{lemma_LxBur}\\
$\Sym\times\BGenmat_m=\Lin\circ\bigl((1+X)^m-1\bigr)$
& Proposition~\ref{prop_enum_bgenm}\\
$\Sym\times\Genmat_m=\bigl(\Bal\times \Set(mX)\bigr)\circ\Lin^c$
& Proposition~\ref{prop_bal_mLc}\\
$\Sym\times\BGenmat_m=\bigl(\Bal\times \Set(mX)\bigr)\circ\Lin^c_{(-1)}$
& Proposition~\ref{species_sri}\\
$\Sym\times\Mat=
\left(\Bal\times\Bal\right)\circ\Lin^c$
& Proposition~\ref{prop_spec_burmat}\\
$\Sym\times\BMat=\left(\Bal\times\Bal\right)\circ\Lin^c_{(-1)}$
& Proposition~\ref{species_sri}\\
$\Sym\times\Mat^{s,t}=\left(\Bal^s\times\Bal^t\right)\circ\Lin^c$
& Equation~\eqref{eq_symXmatst}
\end{tabular}
\end{center}
\caption{Species equations}\label{table_species}
\end{table}

\FloatBarrier

\end{document}